\newtheorem{them}{Theorem}[section]
\newtheorem{lem}{Lemma}[section]
\newtheorem{coro}{Corollary}[section]
\newtheorem{pro}{Proposition}[section]
\journalname{Set-Value Var. Anal}
\begin{document}

\title{BCQ and Strong BCQ for Nonconvex Generalized Equations with Applications to Metric Subregularity}
%\thanks{Communicated by Boris S. Mordukhovich}}

\titlerunning{BCQ and Strong BCQ for Nonconvex Generalized Equations with Applications}

%\subtitle{Do you have a subtitle?\\ If so, write it here}

%\titlerunning{Short form of title}        % if too long for running head

\author{Liyun Huang\and Qinghai He\and Zhou Wei}
        %etc.

%\authorrunning{Short form of author list} % if too long for running head

\institute{Liyun Huang \at School of Mathematics and Information Science, Qujing Normal University,\\ Qujing 655011,Yunnan Province, P. R. China\\ \email{ynszlyb@163.com}
  \and Qinghai He \at Department of Mathematics, Yunnan University,
 Kunming 650091, P. R. China\\ \email{heqh@ynu.edu.cn}\and
 Zhou Wei(\Letter) \at Department of Mathematics, Yunnan University,
 Kunming 650091, P. R. China\\ \email{wzhou@ynu.edu.cn}}
                    %  \\
%             \emph{Present address:} of F. Author  %  if needed

\date{Received: date / Accepted: date}
% The correct dates will be entered by the editor

\maketitle

\begin{abstract}
In this paper, based on basic constraint qualification (BCQ) and strong BCQ for convex generalized equation, we are inspired to further discuss constraint qualifications of BCQ and strong BCQ for nonconvex generalized equation and then establish their various characterizations. As applications, we use these constraint qualifications to study metric subregularity of nonconvex generalized equation and provide necessary and/or sufficient conditions in terms of constraint qualifications considered herein to ensure nonconvex generalized equation having metric subregularity.

\keywords{Strong BCQ\and coderivative\and normal cone\and metric subregularity\and L-subsmooth}
% \PACS{PACS code1 \and PACS code2 \and more}

\subclass{ 90C31\and 90C25\and 49J52\and 46B20}
\end{abstract}

\section{Introduction}
Let $X$ and $Y$ be Banach spaces and $F:X \rightrightarrows Y$ be a
closed multifunction, and let $A$ be a closed subset of $X$ and $b$ be a given point in $Y$.
Consider the following generalized equation with constraint (GEC)
$$
b\in F(x)\,\,\mathrm{subject}\,\,\mathrm{to}\,\,x\in A. \eqno{\rm (GEC)}
$$
This paper is devoted to several concepts of constraint qualifications for (GEC) and applications to metric subregularity of (GEC).

It is well known that basic constraint qualification (BCQ) for
continuous convex inequalities is a fundamental concept in mathematical programming, and has been extensively studied by many authors. Readers could consult references \cite{BB,Li,HL,LN,LNS,W,WZ,11,9,ZW} for the details on BCQ as well as its close relationship with other important concepts in optimization. In 2004, Zheng and Ng \cite{11} made use of singular subdifferential to introduce the concept of strong BCQ which is strictly stronger than BCQ for the convex inequality defined by one lower semicontinuous convex function, and used this notion to characterize metric regularity of the convex inequality. Afterwards, Hu \cite{10} further studied strong BCQ and introduced one measurement of end set to provide equivalent conditions for strong BCQ. In 2007, Zheng and Ng \cite{7} generalized the concepts of BCQ and strong BCQ to the case of convex generalized equation and used these constraint qualifications to obtain necessary and sufficient conditions for convex generalized equation to have metric subregularity. Naturally, it is interesting and important to further consider constraint qualifications as well as applications for nonconvex generalization equation. Motivated by this and as one aim of this paper, we are inspired by \cite{11} and \cite{10} to discuss BCQ and strong BCQ for (GEC) as well as their characterizations and apply these constraint qualifications to the study on metric subregularity of (GEC).

Metric subregularity is a well-known and useful concept in mathematical programming and optimization, and has been extensively studied by many authors under various names (cf. \cite{9,4,BS,5,13,29,22,23,28} and references therein). Note that Zheng and Ng \cite{7} discussed metric subregularity for convex generalized equation and provided dual characterizations for metric subregularity in terms of coderivative and normal cone. Recently the authors \cite{HYZ} considered metric subregularity for subsmooth generalized constraint equation. Based on \cite{7,HYZ} and as the other aim of this paper, we further investigate metric subregularity of (GEC) and mainly establish several necessary and/or sufficient conditions for (GEC) to have metric subregularity. These conditions are given in terms of constraint qualifications studied in this paper.

Given a Banach space $X$ with the dual space $X^*$ and a multifunction $\Phi: X\rightrightarrows X^*$, the symbol
\begin{equation*}
\begin{array}r
\mathop{\rm Limsup}\limits_{y\rightarrow x}\Phi(x):=\Big\{x^*\in X^*: \exists \ {\rm sequences} \ x_n\rightarrow x \ {\rm and} \ x_n^*\stackrel{w^*}\longrightarrow x^* \ {\rm with}\  \\
x_n^*\in \Phi(x_n) \ {\rm for\ all \ } n\in \mathbb{N} \Big\}
\end{array}
\end{equation*}
signifies the  sequential Painlev\'{e}-Kuratowski outer/upper limit of $\Phi(x)$ as $y\rightarrow x$.

\section{Preliminaries}
Let $X, Y$ be Banach spaces with the closed unit balls denoted by
$B_{X}$ and $B_Y$, and let $X^*, Y^*$ denote the dual space of $X$
and $Y$ respectively. For a closed subset $A$ of $X$, let $\overline
A$ denote the closure of $A$. For $a\in A$, let $T_c(A,a)$ and $T(A, a)$ denote the
Clarke tangent cone and the contingent (Bouligand) cone of $A$ at $a$
respectively, which are defined by
$$
T_c(A,a):=\liminf\limits_{x\stackrel{A}{\rightarrow}a,t\rightarrow
0^+}\frac{A-x}{t}\;\;\mathrm{and}\;\;T(A,a):=\limsup\limits_{t\rightarrow 0^+}\frac{A-a}{t},
$$
where $x\stackrel{A}{\rightarrow}a$ means that $x\rightarrow a$ with
$x\in A$. Thus, $v\in T_c(A,a)$ if and only if for any
$a_n\stackrel{A}{\rightarrow}a$ and any $t_n\rightarrow 0^+$, there exists $v_n\rightarrow v$ such that $a_n+t_nv_n\in A$ for all $n$, and
$v\in T(A,a)$ if and only if there exist $v_n\rightarrow v$ and $t_n\rightarrow 0^+$ such that $a+t_nv_n\in A$ for all $n$.

We denote by $N_c(A,a)$ the Clarke normal cone of $A$ at $a$, that is,
$$N_c(A,a):=\{x^*\in X^*:\;\langle x^*,h\rangle\leq0\;\;\forall h\in
T_c(A,a)\}.$$
Let $\hat N(A,a)$ denote the Fr\'{e}chet normal cone of $A$ at $a$ which is defined by
$$
\hat N(A,a):=\left\{x^*\in
X^*:\limsup\limits_{y\stackrel{A}\rightarrow a}\frac{\langle x^*,
y-a\rangle}{\|y-a\|}\leq 0\right\},
$$
and let $N(A, a)$ denote the Mordukhovich (limiting/basic) normal cone of $A$ at $a$ which is defined by
$$
N(A, a):=\mathop{\rm Limsup}_{x\stackrel A\rightarrow a, \varepsilon\downarrow
0}\hat N_{\varepsilon}(A, x),
$$
where $\hat N_{\varepsilon}(A, x)$ is the set of $\varepsilon$-normal to $A$ at
$x$ and defined as
$$
\hat N_{\varepsilon}(A, x):=\left\{x^*\in
X^*:\limsup\limits_{y\stackrel{A}\rightarrow x}\frac{\langle x^*,
y-x\rangle}{\|y-x\|}\leq \varepsilon\right\}.
$$
It is known from  \cite{17} and \cite{18} that
$$\hat N(A,a)\subset N(A, a)\subset N_c(A,a).$$
If $A$ is convex, all normal cones coincide and reduce to the normal cone in the sense of convex analysis; that is
$$
N_c(A,a)=N(A, a)=\hat{N}(A,a)=\{x^*\in X^*:\;\langle
x^*,x-a\rangle\leq 0\;\; \forall x\in A\}.
$$

For the case when $X$ is an Asplund space (cf. \cite{16} for definitions and their equivalences), Mordukhovich and
Shao \cite{18} have proved that
\begin{equation}\label{2.2a}
  N_c(A, a)=\overline{\rm co}^{w^*}(N(A,
a))\;\;\mathrm{and}\;\;N(A, a)=\mathop{\rm Limsup}_{x\stackrel A\rightarrow
a}\hat N(A, x)
\end{equation}
where $\overline{\rm co}^{w^*}$ denotes the weak$^*$ closed convex hull. This means $x^*\in N(A, a)$ if and only if there exist $x_n\stackrel{A}\rightarrow a$ and $x^*_n\stackrel{w^*}\rightarrow x^*$ such that $x_n^*\in \hat N(A, x_n)$ for all $n$.

Let $F:X\rightrightarrows Y$ be a multifunction. Recall that $F$ is said to be closed if ${\rm gph}(F)$ is a closed subset of $X\times Y$, where ${\rm gph}(F):=\{(x, y)\in X\times Y : y\in F(x)\}$ is the graph of $F$. Let $(x, y)\in {\rm gph}(F)$. Recall that the Clarke tangent derivative $D_cF(x, y)$ of $F$ at $(x, y)$ is defined by
$$
{\rm gph}(D_cF(x, y)):=T_c({\rm gph}(F),(x, y) ).
$$
Let $\hat D^*F(x, y), D^*F(x, y), D_c^*F(x, y): Y^*\rightrightarrows X^*$ denote Fr\'echet, Mordukhovich and Clarke coderivatives of $F$ at $(x, y)$ respectively, and they are defined as
$$
\begin{array}l
\hat D^*F(x, y)(y^*):=\{x^*\in X^* : (x^*, -y^*)\in \hat N({\rm gph}(F), (x, y))\},\\
D^*F(x, y)(y^*):=\{x^*\in X^* : (x^*, -y^*)\in N({\rm gph}(F), (x, y))\},\\
D_c^*F(x, y)(y^*):=\{x^*\in X^* : (x^*, -y^*)\in N_c({\rm gph}(F), (x, y))\}.
\end{array}
$$

Let  $\phi :X\rightarrow \mathbb{R}\cup\{+\infty\}$ be a proper
lower semicontinuous function and $x\in
\mathrm{dom}(\phi):=\{y\in X: \phi(y)<+\infty\}$. We denote Fr\'{e}chet, Mordukhovich and Clarke subdifferentials of $\phi$ at $x$ by $\hat{\partial}\phi(x), \partial\phi(x)$ and $ \partial_c\phi(x)$, respectively, which are defined as
\begin{eqnarray*}
\hat{\partial}\phi(x):=\{x^*\in X^* : (x^*, -1)\in \hat N({\rm epi}(\phi), (x, \phi(x)))\},\\
\partial\phi(x):=\{x^*\in X^* : (x^*, -1)\in N({\rm epi}(\phi), (x, \phi(x)))\},\\
\partial_c\phi(x):=\{x^*\in X^* : (x^*, -1)\in N_c({\rm epi}(\phi), (x, \phi(x)))\},
\end{eqnarray*}
where ${\rm epi}(\phi):=\{(x, \alpha)\in X\times \mathbb{R}: \phi(x)\leq \alpha\}$ denotes the epigraph of $\phi$. It is known that
$$
\hat{\partial}\phi(x)\subset\partial\phi(x)\subset\partial_c\phi(x).
$$
Further, one can verify that
$$
\hat{\partial}\phi(x)=\left\{x^*\in X^*:\;\liminf\limits_{z\rightarrow x}
\frac{\phi(z)-\phi(x)-\langle x^*,z-x\rangle}{\|z-x\|}\geq0\right\}.
$$
and
$$\partial_c \phi(x)=\{x^*\in X^*:\;\langle x^*,h\rangle
\leq \phi^{\circ}(x;h)\;\;\;\forall h\in X\},$$
here $\phi^{\circ}(x;h)$ denotes the generalized Rockafellar directional derivative of $\phi$
at $x$ along the direction $h$ and is defined by
$$
\phi^{\circ}(x;h):=\lim\limits_{\varepsilon\downarrow
0}\limsup\limits_{z\stackrel{\phi}{\rightarrow}x,
\,t\downarrow0}\inf_{w\in h+\varepsilon
B_{X}}\frac{\phi(z+tw)-\phi(z)}{t},
$$
where  $ z\stackrel{\phi}\rightarrow x$ means that $z\rightarrow x$
and $\phi(z)\rightarrow \phi(x)$. When $\phi$ is locally
Lipschitzian around $x$, $\phi^{\circ}(x;h)$ reduces to Clarke
directional derivative; that is
\begin{equation}\label{2.2}
\phi^{\circ}(x;h)=\limsup\limits_{z\rightarrow x,
\,t\downarrow0}\frac{\phi(z+th)-\phi(z)}{t}.
\end{equation}

For the case that $X$ is an Asplund space, Mordukhovich and
Shao \cite{18} have proved that
$$
\partial\phi(x)=\mathop{\rm Limsup}_{y\stackrel \phi\rightarrow
x}\hat{\partial}\phi(y);
$$
that is, $x^*\in\partial\phi(x)$ if and only if there exist $x_n\stackrel{\phi}\rightarrow x$ and $x_n^*\stackrel{w^*}\rightarrow x^*$ such that $x_n^*\in\hat{\partial}\phi(x_n)$ for all $n$.

The following lemmas will be used in our analysis. Readers are invited to consult references \cite{15} and \cite{14} respectively for more details.

\begin{lem}
Let $X$ be a Banach (resp. an Asplund) space and  $A$ be a nonempty closed
subset of $X$. Let $\gamma\in (0,\;1)$. Then for any $x\not\in A$
there exist $a\in A$ and $a^*\in N_c(A,a)$ (resp. $a^*\in
\hat{N}(A,a)$) with $\|a^*\|=1$ such that
$$\gamma\|x-a\|<\min\{d(x,A),\;\langle a^*,x-a\rangle\}.$$
\end{lem}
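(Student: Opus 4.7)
My approach is to produce $a$ as an approximate projection of $x$ onto $A$ via Ekeland's variational principle, then extract $a^*$ by applying the generalized Fermat rule to the resulting perturbed minimization and normalizing. The Hahn--Banach characterization of the subdifferential of the norm at a nonzero point furnishes a natural candidate for the direction of $a^*$; the Ekeland perturbation is chosen small enough that after normalization the desired quantitative estimates survive. The Banach/Clarke and Asplund/Fr\'echet statements share this outline, differing only in which sum rule is invoked.

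Concretely, I would fix a small auxiliary parameter $\eta\in(0,1)$ (to be pinned down at the end in terms of $\gamma$), set $r:=d(x,A)>0$, and pick $\tilde a\in A$ with $\|x-\tilde a\|<(1+\eta)r$. Apply Ekeland's principle with $\varepsilon=\eta r$ and $\lambda=\sqrt{\eta}\,r$ to $y\mapsto\|x-y\|$ on the closed (hence complete) set $A$; this produces $a\in A$ satisfying $r\le\|x-a\|\le(1+\eta+\sqrt{\eta})r$ and minimizing $y\mapsto\|x-y\|+\sqrt{\eta}\,\|y-a\|$ over $A$. The Clarke sum rule then yields
\[
0\in\partial_c(\|x-\cdot\|)(a)+\sqrt{\eta}\,B_{X^*}+N_c(A,a).
\]
Because $x\neq a$, the first summand equals $\{-u^*:\|u^*\|=1,\ \langle u^*,x-a\rangle=\|x-a\|\}$ and is nonempty by Hahn--Banach. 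Picking such a $u^*$ produces $b^*\in N_c(A,a)$ with $\|b^*-u^*\|\le\sqrt{\eta}$, so $\|b^*\|\in[1-\sqrt{\eta},1+\sqrt{\eta}]$ and $\langle b^*,x-a\rangle\ge(1-\sqrt{\eta})\|x-a\|$. Setting $a^*:=b^*/\|b^*\|\in N_c(A,a)$ gives a unit-norm Clarke normal with $\langle a^*,x-a\rangle\ge\frac{1-\sqrt{\eta}}{1+\sqrt{\eta}}\|x-a\|$.

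To close the argument, fix $\eta$ in advance so small that both $\frac{1-\sqrt{\eta}}{1+\sqrt{\eta}}>\gamma$ and $\gamma(1+\eta+\sqrt{\eta})<1$. The second inequality gives $\gamma\|x-a\|\le\gamma(1+\eta+\sqrt{\eta})r<r=d(x,A)$, while the first gives $\gamma\|x-a\|<\langle a^*,x-a\rangle$; together these yield the claim. The Asplund/Fr\'echet version follows the same route with the fuzzy Fr\'echet sum rule in place of the Clarke sum rule. The fuzzy rule outputs the Fr\'echet normal at a base point $a'\in A$ arbitrarily close to $a$ (rather than at $a$ itself); by tightening the fuzziness tolerance, the same quantitative bounds persist at $a'$, which then plays the role of $a$. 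The principal obstacle is precisely this simultaneous balancing step: the Ekeland perturbation both inflates $\|x-a\|$ above $r$ and degrades the angle of $a^*$ with $x-a$ after normalization, and $\eta$ must be fixed beforehand small enough (in terms of $\gamma$) that both effects remain compatible with the slack $1-\gamma$.
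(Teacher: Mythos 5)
The paper does not prove this lemma itself; it is quoted from the literature (the references cited are Zheng--Ng, SIAM J. Optim. 19 (2008), and Borwein--Fitzpatrick), where the argument is precisely the one you give: Ekeland's variational principle applied to $y\mapsto\|x-y\|$ on $A$, followed by the exact Clarke (resp.\ fuzzy Fr\'echet) sum rule and normalization. Your proposal is correct and the quantitative bookkeeping (choice of $\eta$ in terms of $\gamma$, survival of the estimates after normalization and, in the Asplund case, after relocating to the nearby base point) all checks out; the only cosmetic imprecision is the phrase ``picking such a $u^*$'' --- the Fermat rule hands you a specific $u^*\in\partial\|\cdot\|(x-a)$ rather than letting you choose one, but every element of that set satisfies the properties you use, so nothing is affected.
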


\begin{lem}
Let $X$ be an Asplund space and $A$ be a nonempty closed subset of
$X$. Let $x\in X\backslash A$ and $x^*\in \hat{\partial}d(\cdot,
A)(x)$. Then, for any $\varepsilon>0$ there exist $a\in A$ and
$a^*\in \hat N(A, a)$ such that
$$
\|x-a\|<d(x,
A)+\varepsilon\,\,\,\,\mathrm{and}\,\,\,\,\|x^*-a^*\|<\varepsilon
$$
\end{lem}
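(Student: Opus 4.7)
The plan is to combine the Fr\'echet subgradient inequality for $x^*$ with Ekeland's variational principle and the fuzzy sum rule available in Asplund spaces. Since $x\notin A$ and $x^*\in\hat\partial d(\cdot,A)(x)$, for any $\eta>0$ there is $\rho>0$ with
$$d(y,A)-\langle x^*,y-x\rangle\;\geq\;d(x,A)-\eta\|y-x\|,\quad y\in B(x,\rho).$$
I would pick a near-projection $a_0\in A$ with $\|x-a_0\|<d(x,A)+\eta^2$ and define on the Asplund product $X\times X$ the lower semicontinuous function
$$f(y,a):=\|y-a\|-\langle x^*,y-x\rangle+\delta_A(a)+\delta_{B(x,\rho/2)}(y),$$
which is bounded below (by the displayed inequality, using $\|y-a\|\geq d(y,A)$) and satisfies $f(x,a_0)\leq\inf f+\eta^2+\eta\rho/2$. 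Applying Ekeland to $f$ with parameters tuned to force both a small displacement and a small Lipschitz multiplier yields $(\bar y,\bar a)$ near $(x,a_0)$ that globally minimizes $f+\kappa(\|\cdot-\bar y\|+\|\cdot-\bar a\|)$ for some $\kappa$ which tends to $0$ as $\eta\to 0$.

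For $\eta$ small, $\bar y$ lies interior to $B(x,\rho/2)$ and $\bar y\neq\bar a$, so the convex function $(y,a)\mapsto\|y-a\|$ has Fr\'echet subgradients at $(\bar y,\bar a)$ of the form $(u^*,-u^*)$ with $u^*$ a unit norming functional of $\bar y-\bar a$. Decoupling the Ekeland optimality into its $y$- and $a$-components, the $y$-partial minimum together with the convex subdifferential formulas for $\|\cdot-\bar a\|$ and $\|\cdot-\bar y\|$ produces such a $u^*$ satisfying $\|u^*-x^*\|\leq\kappa$. For the $a$-partial minimum of $\|\bar y-\cdot\|+\kappa\|\cdot-\bar a\|+\delta_A$, the Mordukhovich-Fabian fuzzy sum rule for Asplund spaces delivers a point $a\in A$ close to $\bar a$ and $a^*\in\hat N(A,a)$ with $\|a^*-u^*\|$ arbitrarily small, so $\|x^*-a^*\|<\varepsilon$ after a suitable choice of $\eta$.

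The distance bound then comes from $\|x-a\|\leq\|x-a_0\|+\|a_0-\bar a\|+\|\bar a-a\|<d(x,A)+\eta^2+(\text{Ekeland displacement})+(\text{fuzzy perturbation})$, all of which are controlled by $\eta$ and a free parameter, so choosing $\eta$ small enough yields $\|x-a\|<d(x,A)+\varepsilon$. The main obstacle is the fuzzy-calculus step: since $\|\cdot-\cdot\|$ is not Fr\'echet differentiable on a general Banach space, one cannot appeal to a classical Lagrange multiplier rule and must invoke the Asplund-space fuzzy sum rule to separate the constraint $a\in A$ and extract a genuine Fr\'echet (not merely limiting) normal at a nearby feasible point; this is precisely where the Asplund hypothesis on $X$ is essential.
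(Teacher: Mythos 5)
The paper itself gives no proof of this lemma (it is quoted from the literature, with the reader pointed to \cite{14}), so your argument has to stand on its own. Its overall architecture is the standard and correct one: the Fr\'echet subgradient inequality for $d(\cdot,A)$ at $x$, penalization of the constraint $a\in A$ through $(y,a)\mapsto\|y-a\|-\langle x^*,y-x\rangle$ on the product space, Ekeland's principle, and an Asplund-space fuzzy sum rule to extract a genuine Fr\'echet normal at a nearby point of $A$. The Asplund hypothesis enters exactly where you say it does, and the distance estimate at the end is controlled as you describe.

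There is, however, a genuine gap in the ``decoupling'' of the Ekeland optimality condition into separate $y$- and $a$-partial minimizations. The $y$-partial step produces a norm-one functional $u^*$ norming $\bar y-\bar a$ with $\|u^*-x^*\|\le\kappa$. The $a$-partial step, after the fuzzy sum rule, produces $a^*\in\hat N(A,a)$ close to a norming functional $\tilde u^*$ of $\bar y-a_1$ for some \emph{other} point $a_1$ near $\bar a$. These are norming functionals of two different vectors, and the duality-type map $v\mapsto\{v^*:\|v^*\|=1,\ \langle v^*,v\rangle=\|v\|\}$ is neither single-valued nor continuous on a general Asplund space (which need not admit a smooth renorm, and renorming is not available here since $d(\cdot,A)$ and the conclusion depend on the given norm); even at a single point the set of norming functionals can have diameter $2$ (take $\mathbb{R}^2$ with the max norm at $(1,1)$). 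So nothing forces $\|\tilde u^*-u^*\|$ to be small, and the chain $a^*\approx\tilde u^*$, $u^*\approx x^*$ does not close. The repair is to \emph{not} decouple: apply the fuzzy sum rule on $X\times X$ to $g+\delta_{X\times A}$ with $g(y,a):=\|y-a\|-\langle x^*,y-x\rangle+\kappa(\|y-\bar y\|+\|a-\bar a\|)$ convex and Lipschitz. Because $\partial_{(y,a)}\|y-a\|=\{(w^*,-w^*):\ w^*\ \text{norms}\ y-a\}$ when $y\ne a$, the \emph{same} $w^*$ occupies both slots; the first component of the resulting inclusion gives $\|w^*-x^*\|\le\kappa+\mu$ and the second gives $\|a^*-w^*\|\le\kappa+\mu$, which closes the argument. (Two minor tuning points: take the accuracy of the near-projection $a_0$ to be $\eta\rho$ rather than $\eta^2$, since $\rho$ may be far smaller than $\eta$ and otherwise the Ekeland multiplier $\kappa$ need not tend to $0$; and record that $\|\bar y-\bar a\|\ge d(x,A)-\rho>0$ so that all the points produced by the fuzzy sum rule stay away from the diagonal.)
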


As one suitable substitute of convexity in this paper, we consider the concept of subsmooth which is introduced by Aussel, Daniilidis and Thibault \cite{2}. Recall that $A$ is said to be subsmooth at $a\in A$, if for any $\varepsilon>0$ there exists
$\delta>0$ such that
$$
\langle x^*-u^*, x-u\rangle\geq -\varepsilon\|x-u\|
$$
whenever $x, u\in B(a, \delta)\cap A$, $x^*\in N_c(A, x)\cap
B_{X^*}$ and $u^*\in N_c(A, u)\cap B_{X^*}$.

 When $A$ is subsmooth at $a\in A$, one has
$$
N_c(A, a)=N(A, a)=\hat N(A, a).
$$
Further, Zheng and Ng \cite{15} provided one characterization for
this notion; that is, $A$ is subsmooth at $a\in A$ if and only if for any $\varepsilon>0$ there exists
$\delta>0$ such that
\begin{equation}\label{2.2}
\langle u^*, x-u\rangle\leq d(x, A)+\varepsilon\|x-u\|\;\;\forall
x\in B(a, \delta)
\end{equation}
whenever $u\in B(a, \delta)\cap A$ and $u^*\in N_c(A, u)\cap
B_{X^*}$. Readers are invited to consult \cite[Proposition 2.1]{15} and \cite[Proposition 3.1]{ZW} for more details.

For a closed multifunction, Zheng and Ng \cite{9} introduced the concept of L-subsmooth and studied calmness for this kind of multifunctions. Recall from \cite{9} that a closed multifunction $F:X\rightrightarrows Y$ is said
to be

(i) L-subsmooth at $(a, b)\in {\rm gph}(F)$ if for any $\varepsilon>0$ there exists
$\delta>0$ such that
\begin{equation}
\langle u^*, x-a\rangle+\langle v^*,
y-b\rangle\leq\varepsilon(\|x-a\|+\|y-b\|)
\end{equation}
whenever $v\in F(a)\cap B(b, \delta)$, $(u^*, v^*)\in N_c({\rm gph}(F), (a,
v))\cap (B_{X^*}\times B_{Y^*})$, and $(x, y)\in {\rm gph}(F)$ with
$\|x-a\|+\|y-b\|<\delta$;

(ii) $\mathcal{L}$-subsmooth at $(a, b)\in {\rm gph}(F)$
if $F^{-1}$ is L-subsmooth at $(b, a)$.

Readers are invited to consult reference \cite{9} for more properties and examples with respect to the concept of L-subsmooth.

\setcounter{equation}{0}

\section{BCQ and strong BCQ for nonconvex (GEC)}

This section is devoted to constraint qualifications of BCQ and strong BCQ for nonconvex (GEC) as well as their equivalences. We first recall the concepts of BCQ and strong BCQ for convex (GEC).

Suppose that $F:X\rightrightarrows Y$ is a convex closed multifunction and $A$ is a convex closed subset of $X$. Recall from \cite{11} that convex (GEC) is said to have the BCQ at $a\in S$, if
\begin{equation}\label{3.1}
  N(S, a)=D^*F(a, b)(Y^*)+N(A, a)
\end{equation}
and
convex (GEC) is said to have the strong BCQ at $a\in S$, if there exists $\tau\in(0, +\infty)$ such that
\begin{equation}\label{3.2}
  N(S, a)\cap B_{X^*}\subset\tau(D^*F(a, b)(B_{Y^*})+N(A, a)\cap B_{X^*}),
\end{equation}
where $S:=\{x\in A: b\in F(x)\}$ is the solution
set of (GEC).

Taking applications of BCQ and strong BCQ into account, we are inspired by \eqref{3.1} and \eqref{3.2} to consider the following forms of BCQ and strong BCQs for nonconvex (GEC).

{\it Let $a\in S$. We say that

{\rm(i)} (GEC) has the BCQ at $a$ in the sense of Clarke, if
\begin{equation}\label{3.3}
  N_c(S, a)\subset D_c^*F(a, b)(Y^*)+N_c(A, a);
\end{equation}

{\rm(ii)} (GEC) has the strong BCQ at $a$ in the sense of Clarke, if there exists $\tau\in(0, +\infty)$ such that
\begin{equation}\label{3.4}
  N_c(S, a)\cap B_{X^*}\subset\tau(D_c^*F(a, b)(B_{Y^*})+N_c(A, a)\cap B_{X^*});
\end{equation}

{\rm(iii)} (GEC) has the strong BCQ at $a$ in the sense of Fr\'echet, if there exists $\tau\in(0, +\infty)$ such that
\begin{equation}\label{3.4a}
  \hat N(S, a)\cap B_{X^*}\subset\tau(D_c^*F(a, b)(B_{Y^*})+N_c(A, a)\cap B_{X^*});
\end{equation}

{\rm(iv)} (GEC) has the strong BCQ at $a$ in the sense of Mordukhovich, if there exists $\tau\in(0, +\infty)$ such that}
\begin{equation}\label{3.4b}
  N(S, a)\cap B_{X^*}\subset\tau(D_c^*F(a, b)(B_{Y^*})+N_c(A, a)\cap B_{X^*}).
\end{equation}

\noindent{\bf Remark 3.1.} It is clear that \eqref{3.4}$\Rightarrow$\eqref{3.4b}$\Rightarrow$\eqref{3.4a}. Furthermore, for the case that $F$ is a convex closed multifunction and $A$ is a convex closed subset, the solution set $S$ is convex, and coderivatives and normal cones are in the sense of convex analysis. Thus, strong BCQs of \eqref{3.4}, \eqref{3.4a} and \eqref{3.4b} reduce to \eqref{3.2}, and BCQ of \eqref{3.3} is equivalent to \eqref{3.1} as the inverse inclusion of \eqref{3.1} holds trivially in this case.

Recall that for the convex inequality defined by a proper lower semicontinuous convex function, Hu \cite{10} introduced one concept of end set to study BCQ and strong BCQ, and used this concept to characterize strong BCQ. Motivated by this, we are interesting in characterizing strong BCQ of \eqref{3.4} for (GEC) in this way. We recall the concept of end set.

Let $C$ be a subset of $X$. Recall from \cite{10} that the end set of
$C$ is defined by
\begin{equation}\label{3.5}
E[C]:=\{z\in \overline{[0, 1]C}: tz\notin \overline{[0,
1]C}\,\,\,\,\forall t>1\}.
\end{equation}
It is shown in \cite{10} that if $C$ is closed and convex then
$$
E[C]=\{z\in C: tz\notin C\,\,\,\,\forall t>1\}.
$$

The following theorem provides an equivalent condition of strong BCQ of \eqref{3.4} for (GEC) by using BCQ and end set.

\begin{them}
Let $z\in S$ and $\tau\in (0, +\infty)$. Then (GEC) has the strong BCQ of \eqref{3.4} at $z$ with constant $\tau>0$ if
and only if (GEC) has the BCQ of \eqref{3.3} at $z$ and
\begin{equation}\label{3.6}
d(0, E[D_c^*F(z, b)(B_{Y^*})+N_c(A, z)\cap B_{X^*}])\geq
\frac{1}{\tau}.
\end{equation}
\end{them}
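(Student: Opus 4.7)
Set $C := D_c^*F(z,b)(B_{Y^*}) + N_c(A,z)\cap B_{X^*}$. The plan rests on three preparatory facts about $C$. First, positive homogeneity of the Clarke coderivative together with convexity of $N_c(A,z)\cap B_{X^*}$ (which contains $0$) gives the star property $[0,1]C \subset C$. Second, this yields $\overline{[0,1]C} = \overline{C}$, so $E[C] = \{u^* \in \overline{C} : \rho_{\overline{C}}(u^*) = 1\}$, the level-one set of the Minkowski gauge of $\overline{C}$. Third, $\bigcup_{t \geq 0}tC = D_c^*F(z,b)(Y^*) + N_c(A,z)$, and one has $C \subset N_c(S,z)$ via the inclusion $D_c^*F(z,b)(Y^*) \subset N_c(F^{-1}(b),z)$ (read off from $(h,0)\in T_c(\mathrm{gph}\,F,(z,b))$ whenever $h \in T_c(F^{-1}(b),z)$) together with the standard sum rule $N_c(F^{-1}(b),z) + N_c(A,z) \subset N_c(F^{-1}(b)\cap A,z) = N_c(S,z)$.

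\textbf{Necessity.} BCQ \eqref{3.3} follows from strong BCQ \eqref{3.4} by scaling any $x^* \in N_c(S,z)$ to $B_{X^*}$ and invoking the cone identity from fact three. For the end-set bound, fix $u^* \in E[C]$ (which is nonzero, as $0$ satisfies $t\cdot 0 = 0 \in \overline{C}$ for every $t>1$). The normalization $u^*/\|u^*\|$ lies in $N_c(S,z)\cap B_{X^*}$, hence in $\tau C \subset \tau\overline{C}$, which gives $\rho_{\overline{C}}(u^*/\|u^*\|) \leq \tau$. Homogeneity of $\rho_{\overline{C}}$ with $\rho_{\overline{C}}(u^*) = 1$ yields $1 \leq \tau\|u^*\|$, i.e., $\|u^*\| \geq 1/\tau$.

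\textbf{Sufficiency.} Given a nonzero $x^* \in N_c(S,z) \cap B_{X^*}$, set $t_0 := \rho_{\overline{C}}(x^*)$. BCQ ensures $t_0 < \infty$; norm-closedness of $\overline{C}$ together with $t_0 > 0$ places $x^*/t_0 \in \overline{C}$, while the definition of $t_0$ forces $s(x^*/t_0) \notin \overline{C}$ for all $s > 1$, so $x^*/t_0 \in E[C]$. The end-set hypothesis then gives $\|x^*/t_0\| \geq 1/\tau$, whence $t_0 \leq \tau\|x^*\| \leq \tau$. Picking an approximating sequence $(c_n) \subset C$ with $c_n \to x^*/t_0$ and using the star property to write $(t_0/\tau)c_n \in C$, one gets $t_0 c_n = \tau(t_0/\tau)c_n \in \tau C$; passing to the limit then delivers $x^* \in \tau C$.

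The main obstacle is the final limit step in the sufficiency argument, which naively yields only $x^* \in \overline{\tau C}$. Bridging this gap requires either closedness of $C$ in the setting at hand, or a more delicate use of BCQ to produce an exact finite-scale decomposition $x^* = tc$ with $c \in C$ at some $t \in [t_0, \tau]$, which can then be rewritten via $[0,1]C \subset C$ as $x^* = \tau(t/\tau)c \in \tau C$. Proving that such an exact (not merely asymptotic) decomposition is available is where one expects the argument to be most delicate; everything else is homogeneity and gauge bookkeeping.
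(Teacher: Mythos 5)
Your argument is, in substance, the paper's own: the necessity halves coincide, and in the sufficiency half your gauge $t_0=\rho_{\overline C}(x^*)$ is just the reciprocal of the quantity $\lambda=\sup\{t>0: tx^*\in C\}$ that the paper scales along, where $C:=D_c^*F(z,b)(B_{Y^*})+N_c(A,z)\cap B_{X^*}$. The one obstacle you leave open --- upgrading $x^*\in\overline{\tau C}$ to $x^*\in\tau C$ --- is exactly what the paper disposes of at the outset by observing that $C$ is weak$^*$-closed and convex, and this is worth proving rather than flagging: $D_c^*F(z,b)(B_{Y^*})$ is the image of the weak$^*$-closed set $N_c(\mathrm{gph}(F),(z,b))\cap(X^*\times B_{Y^*})$ under the projection $(x^*,y^*)\mapsto x^*$, and since the second factor ranges over the weak$^*$-compact ball $B_{Y^*}$ (Alaoglu), this projection is weak$^*$-closed; $N_c(A,z)\cap B_{X^*}$ is weak$^*$-compact; and the sum of a weak$^*$-closed set with a weak$^*$-compact set is weak$^*$-closed, hence norm-closed. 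With $C=\overline C$ in hand your bookkeeping terminates exactly as the paper's does: $x^*/t_0\in C\cap E[C]$, so $t_0\le\tau\|x^*\|\le\tau$, and $x^*=t_0(x^*/t_0)\in t_0C\subset\tau C$ by the star property (the degenerate cases $x^*=0$ and $t_0=0$ being immediate since $0\in C$ and $x^*\in sC$ for all $s>0$ respectively).

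A second, more delicate point concerns the inclusion $C\subset N_c(S,z)$, which both you and the paper need in the necessity half in order to feed $u^*/\|u^*\|$ into the strong BCQ. The paper simply asserts it; your proposed derivation does not go through for Clarke constructions. Neither of your two steps is valid in general: $h\in T_c(F^{-1}(b),z)$ does not yield $(h,0)\in T_c(\mathrm{gph}(F),(z,b))$, because the Clarke tangent cone to the graph quantifies over all sequences in $\mathrm{gph}(F)$ converging to $(z,b)$, not only those whose second coordinate equals $b$; and the intersection rule $N_c(B_1,z)+N_c(B_2,z)\subset N_c(B_1\cap B_2,z)$ for Clarke normal cones requires a constraint qualification (it is the Fr\'echet cone, not the Clarke cone, for which the analogous inclusion is automatic). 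In the nonconvex setting this reverse inclusion is genuinely an additional fact --- Remark 3.1 only notes that it holds trivially in the convex case --- so you should either justify it properly or record it as a standing assumption rather than deduce it from a sum rule that fails without qualification.
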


\begin{proof}
The necessity part. Since BCQ follows from strong BCQ trivially, it suffices to prove \eqref{3.6}. Let $x^*\in E[D_c^*F(z,
b)(B_{Y^*})+N_c(A, z)\cap B_{X^*}]$. Then $x^*\not= 0$. Noting that
$D_c^*F(z, b)(B_{Y^*})+N_c(A, z)\cap B_{X^*}$ is weak$^*$-closed and
convex, it follows that
$$
x^*\in D_c^*F(z, b)(B_{Y^*})+N_c(A, z)\cap B_{X^*}\subset N_c(S, z).
$$
Using the strong BCQ of \eqref{3.4}, one has
$$
\frac{x^*}{\tau\|x^*\|}\in D_c^*F(z, b)(B_{Y^*})+N_c(A, z)\cap
B_{X^*}.
$$
This implies that $\frac{1}{\tau\|x^*\|}\leq 1$ by \eqref{3.5}; that is $\|x^*\|\geq \frac{1}{\tau}$. Thus \eqref{3.6} holds.

 The sufficiency part. Let $x^*\in N_c(S, z)\cap B_{X^*}$. We
set
$$
\lambda:=\sup\{t>0: tx^*\in D_c^*F(z, b)(B_{Y^*})+N_c(A, z)\cap
B_{X^*}\}.
$$
Then, $\lambda>0$ as (GEC) has the BCQ at $z$. If $\lambda=+\infty$, then there exists
$t>\frac{1}{\tau}$ such that
$$
tx^*\in D_c^*F(z, b)(B_{Y^*})+N_c(A, z)\cap B_{X^*}.
$$
This and $0\in D_c^*F(z, b)(B_{Y^*})+N_c(A, z)\cap B_{X^*}$ imply that
$$
x^*\in \tau(D_c^*F(z, b)(B_{Y^*})+N_c(A, z)\cap B_{X^*}).
$$
If $\lambda<+\infty$, then one has
$$
\lambda x^*\in E[D_c^*F(z, b)(B_{Y^*})+N_c(A, z)\cap B_{X^*}].
$$
Thus
$$
\lambda\geq\lambda\|x^*\|\geq d(0, E[D_c^*F(z, b)(B_{Y^*})+N_c(A, z)\cap
B_{X^*}])\geq \frac{1}{\tau},
$$
and consequently
$$
x^*\in\tau(D_c^*F(z, b)(B_{Y^*})+N_c(A, z)\cap B_{X^*}).
$$
The proof is complete.
\end{proof}

Next, we use polar and techniques in dual theory to study BCQ and strong BCQ for (GEC).
Let $M$ and $N$ be subsets of $X$ and $X^*$, respectively. Recall
from \cite{24} that the polar of $M$ and $N$ are defined by
$$
M^{\circ}:=\{x^*\in X^*: \langle x^*, x\rangle\leq 1\;\;\forall x\in
M\}\;\;{\rm{and}}\;\;N^{\circ}:=\{x\in X: \langle x^*, x\rangle\leq
1\;\;\forall x^*\in N\}.
$$
It is known that when $M$ is a cone of $X$, the polar of $M$ reduces
to $M^{\ominus}$ which is called the negative polar of $M$, where
$M^{\ominus}:=\{x^*\in X^*: \langle x^*, x\rangle\leq 0\;\;\forall
x\in M\} $. Hence, Clarke tangent cone and Clarke normal cone are dual; that is
\begin{equation}\label{3.7}
T_c(M, x)^{\circ}=N_c(M, x)\;\;{\rm{and}}\;\;N_c(M,
x)^{\circ}=T_c(M, x)\;\;\forall\, x\in M.
\end{equation}

Using this known relationships \eqref{3.7} and the polar, we establish several results on sufficient/necessary conditions for (GEC) to have BCQ and strong BCQs. These conditions are given by Clarke tangent cone and Clarke tangent derivative.

\begin{them}
Let  $z\in S$ and $\tau\in (0, +\infty)$.

 {\rm (i)} Suppose that
\begin{equation}\label{3.8}
  \overline{D_cF^{-1}(b, z)(\frac{1}{\tau} B_Y)}\cap \overline{T_c(A, z)+\frac{1}{\tau} B_X}\subset\overline{T_c(S, z)+B_X}.
\end{equation}
Then (GEC) has the strong BCQ of \eqref{3.4} at $z$ with constant $\tau>0$.

{\rm (ii)} Suppose that
\begin{equation}\label{3.8a}
  \overline{D_cF^{-1}(b, z)(\frac{1}{\tau} B_Y)}\cap \overline{T_c(A, z)+\frac{1}{\tau} B_X}\subset\overline{T(S, z)+B_X}.
\end{equation}
Then (GEC) has the strong BCQ of \eqref{3.4a} at $z$ with constant $\tau>0$.

{\rm (iii)} Suppose that (GEC) has the strong BCQ of \eqref{3.4} at $z$ with constant $\tau>0$. Then
\begin{equation}\label{3.9a}
  \overline{D_cF^{-1}(b, z)(\eta_1 B_Y)}\cap \overline{T_c(A, z)+\eta_2 B_X}\subset\overline{T_c(S, z)+B_X}
\end{equation}
holds for any $\eta_1, \eta_2\in [0, +\infty)$ with $\eta_1+\eta_2\leq\frac{1}{\tau}$.

{\rm (iv)} Suppose that $X$ is of finite dimension and (GEC) has the strong BCQ of \eqref{3.4a} at $z$ with constant $\tau>0$. Then
\begin{equation}\label{3.9b}
  \overline{D_cF^{-1}(b, z)(\eta_1 B_Y)}\cap \overline{T_c(A, z)+\eta_2 B_X}\subset\overline{{\rm co}(T(S, z))+B_X}
\end{equation}
holds for any $\eta_1, \eta_2\in [0, +\infty)$ with $\eta_1+\eta_2\leq\frac{1}{\tau}$.
\end{them}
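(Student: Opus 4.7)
The backbone of the argument is the polar duality $T_c(M,x)^\circ = N_c(M,x)$ from \eqref{3.7}, together with the intersection-polar identity $(C_1 \cap C_2)^\circ = \overline{\rm co}(C_1^\circ \cup C_2^\circ)$ for closed convex sets containing the origin, and the elementary observation that for convex sets $C_1, C_2$ with $0 \in C_1 \cap C_2$ one has ${\rm co}(C_1 \cup C_2) \subset C_1 + C_2$. I would dispose of parts (iii) and (iv) first by a direct dual estimate, and then obtain (i) and (ii) by inverting the same polar bookkeeping.

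For part (iii), pick $x$ in the intersection on the left of \eqref{3.9a} and any $x^* \in N_c(S,z) \cap B_{X^*}$. The strong BCQ \eqref{3.4} lets me write $x^* = \tau(u^* + v^*)$ with $(u^*, -y^*) \in N_c({\rm gph}(F),(z,b))$ for some $y^* \in B_{Y^*}$ and $v^* \in N_c(A,z) \cap B_{X^*}$. Approximating $x$ by $x_n \to x$ with $(x_n, w_n) \in T_c({\rm gph}(F),(z,b))$ and $\|w_n\| \leq \eta_1$, the polarity between $K$ and $K^\circ$ gives $\langle u^*, x_n\rangle \leq \langle y^*, w_n\rangle \leq \eta_1$, hence $\langle u^*, x\rangle \leq \eta_1$; likewise $\langle v^*, x\rangle \leq \eta_2$ because $v^*$ is nonpositive on $T_c(A,z)$ and has norm at most one. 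Summing, $\langle x^*, x\rangle \leq \tau(\eta_1 + \eta_2) \leq 1$, so $x$ lies in $(N_c(S,z) \cap B_{X^*})^\circ = \overline{T_c(S,z) + B_X}$. Part (iv) has the same structure with $\hat N(S,z)$ in place of $N_c(S,z)$; finite dimensionality is then invoked to ensure $(\hat N(S,z) \cap B_{X^*})^\circ = \overline{{\rm co}(T(S,z)) + B_X}$ via the bipolar theorem for non-convex tangent cones, which is exactly where the convex hull in the conclusion comes from.

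For parts (i) and (ii) I reverse the argument through polars. The hypothesis \eqref{3.8} is an inclusion of closed convex sets containing $0$ in $X$; taking polars in $X^*$ reverses it, and the right-hand polar is $N_c(S,z) \cap B_{X^*}$. The intersection-polar identity turns the left-hand polar into $\overline{\rm co}(D_c^*F(z,b)(\tau B_{Y^*}) \cup (N_c(A,z) \cap \tau B_{X^*}))$, and the convex-hull-versus-sum observation places this inside $D_c^*F(z,b)(\tau B_{Y^*}) + N_c(A,z) \cap \tau B_{X^*} = \tau(D_c^*F(z,b)(B_{Y^*}) + N_c(A,z) \cap B_{X^*})$, where the equality uses positive homogeneity of $D_c^*F(z,b)$ and the conic structure of $N_c(A,z)$. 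This is exactly \eqref{3.4}. Part (ii) runs the same way with $\hat N(S,z)$ and the contingent cone $T(S,z)$.

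The main technical hurdle is the polar identification $(\overline{D_cF^{-1}(b,z)(\frac{1}{\tau}B_Y)})^\circ = D_c^*F(z,b)(\tau B_{Y^*})$ that makes the above chain work. I would establish it by realising $D_c^*F(z,b)(\tau B_{Y^*})$ as the projection onto $X^*$ of $N_c({\rm gph}(F),(z,b)) \cap (X^* \times \tau B_{Y^*})$, applying the intersection-polar identity in $X^* \times Y^*$ to rewrite its polar in $X \times Y$ as $\overline{T_c({\rm gph}(F),(z,b)) + \{0\} \times \frac{1}{\tau}B_Y}$, and then slicing at $y = 0$ together with the positive homogeneity of $D_cF^{-1}(b,z)$. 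A secondary concern is the closedness of $\tau(D_c^*F(z,b)(B_{Y^*}) + N_c(A,z) \cap B_{X^*})$: without it the sufficient-condition parts (i) and (ii) can only deliver strong BCQ up to closure, and the sharp constants $\eta_1 + \eta_2 \leq 1/\tau$ in (iii) and (iv) rely on unapproximated membership.
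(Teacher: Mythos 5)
Your proposal is correct and follows essentially the same route as the paper: both arguments rest on Schaefer's intersection--polar identity, the duality $T_c(\cdot)^{\circ}=N_c(\cdot)$, and the identification of the polar of $D_c^*F(z,b)(B_{Y^*})$ with $\overline{D_cF^{-1}(b,z)(B_Y)}$ through the normal cone of ${\rm gph}(F)$, while your parts (iii)--(iv) are just the paper's separation argument phrased directly via $(N_c(S,z)\cap B_{X^*})^{\circ}=\overline{T_c(S,z)+B_X}$ and its Fr\'echet/contingent analogue. The weak$^*$-closedness caveat you raise for $D_c^*F(z,b)(B_{Y^*})+N_c(A,z)\cap B_{X^*}$ is exactly the assumption the paper leaves implicit in its final bipolar step (and asserts without proof in Theorem 3.1), so your proposal sits at the same level of rigor as the published argument.
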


\begin{proof}
(i) We first prove that
\begin{equation}\label{3.9}
  \frac{1}{\tau}\big((D_c^*F(z,
b)(B_{Y^*}))^{\circ}\cap(N_c(A, z)\cap B_{X^*})^{\circ}\big)\subset \overline{T_c(S, z)+B_X}.
\end{equation}

Let $v\in \frac{1}{\tau}\big((D_c^*F(z,
b)(B_{Y^*}))^{\circ}\cap(N_c(A, z)\cap B_{X^*})^{\circ}\big)$. By virtue of \cite[Chapter IV, Theorem 1.5]{24}, one has
\begin{equation}\label{3.10}
\tau v\in (N_c(A,
z)\cap B_{X^*})^{\circ}= \overline{\mathrm{co}}\big(T_c(A, z)\cup
B_X\big)=\overline{T_c(A, z)+B_X}.
\end{equation}
 We claim that
\begin{equation}\label{3.11}
(0, \tau v)\in \big(N_c({\rm gph}(F^{-1}), (b, z))\cap (B_{Y^*}\times
X^*)\big)^{\circ}.
\end{equation}
Indeed, for any $(y^*, x^*)$ in $N_c({\rm gph}(F^{-1}), (b, z))\cap (B_{Y^*}\times X^*)$, one has
$$
x^*\in D_c^*F(z, b)(-y^*)\subset D_c^*F(z, b)(B_{Y^*}).
$$
Noting that $\tau v\in (D_c^*F(z, b)(B_{Y^*}))^{\circ}$, it follows that
$$
\langle (y^*, x^*), (0, \tau v) \rangle=\langle x^*, \tau
v\rangle\leq 1.
$$
This implies that \eqref{3.11} holds.

Since
\begin{eqnarray*}
\big(N_c({\rm gph}(F^{-1}), (z, b))\cap (B_{Y^*}\times
X^*)\big)^{\circ}&=&\overline{\mathrm{co}}\big(T_c({\rm gph}(F^{-1}), (b,
z))\cup(B_Y\times\{0\})\big)\\
&=&\overline{T_c({\rm gph}(F^{-1}), (b,
z))+B_Y\times\{0\}},
\end{eqnarray*}
by using \eqref{3.11}, one can prove that $\tau v\in\overline{D_cF^{-1}(b, z)(B_Y)}$ and so $ v\in
\overline{T_c(S, z)+B_X}$ by \eqref{3.10} and \eqref{3.8}. Thus \eqref{3.9} holds.

Noting that
\begin{equation}\label{3.16a}
(D_c^*F(z, b)(B_{Y^*})+N_c(A, z)\cap B_{X^*})^{\circ}\\
\subset D_c^*F(z, b)(B_{Y^*})^{\circ}\cap(N_c(A, z)\cap
B_{X^*})^{\circ}
\end{equation}
(thanks to $0\in D_c^*F(z, b)(B_{Y^*})\cap N_c(A, z)\cap B_{X^*}$),
it follows from \eqref{3.9}, \eqref{3.11} and \cite[Chapter IV, Theorem 1.5]{24}
that
$$
N_c(S, z)\cap B_{X^*}\subset\tau(D_c^*F(z, b)(B_{Y^*})+ N_c(A,
z)\cap B_{X^*}).
$$
This shows that (GEC) has the strong BCQ of \eqref{3.4} with constant $\tau>0$.

(ii) Using the proof of (i), one has
\begin{equation}\label{3.16b}
  \frac{1}{\tau}\big((D_c^*F(z,
b)(B_{Y^*}))^{\circ}\cap(N_c(A, z)\cap B_{X^*})^{\circ}\big)\subset \overline{T(S, z)+B_X}.
\end{equation}
Since $\hat N(S, z)\cap B_{X^*}\subset(T(S, z)+B_X)^{\circ}$, it follows from \eqref{3.16a} and \eqref{3.16b} that
$$
\hat N(S, z)\cap B_{X^*}\subset\tau(D_c^*F(z, b)(B_{Y^*})+ N_c(A,
z)\cap B_{X^*}).
$$
This means that (GEC) has the strong BCQ of \eqref{3.4a} with constant $\tau>0$.

(iii) Let $\eta_1, \eta_2\in [0, +\infty)$ be such that $\eta_1+\eta_2\leq\frac{1}{\tau}$. Suppose to the contrary that there exists one vector $v\in X$ such that
$$
v\in \overline{D_cF^{-1}(b, z)(\eta_1 B_Y)}\cap \overline{T_c(A, z)+\eta_2 B_X}\ \ {\rm but} \ \ v\not\in \overline{T_c(S, z)+B_X}.
$$
By the seperation theorem, there exists $v^*\in X^*$ with $\|v^*\|=1$ such that
\begin{equation}\label{3.12}
  \langle v^*, v\rangle>\sup\{\langle v^*, u\rangle: u\in T_c(S, z)+B_X\}=1.
\end{equation}
This means that $v^*\in N_c(S, z)\cap B_{X^*}$. By virtue of the strong BCQ, there exist $y^*\in B_{Y^*}$, $z^*\in D_c^*F(z, b)(y^*)$ and $x^*\in N_c(A, z)\cap B_{X^*}$ such that
\begin{equation}\label{3.13}
  v^*=\tau(z^*+x^*).
\end{equation}
Note that $v\in \overline{T_c(A, z)+\eta_2 B_X}$ and one can verify that
\begin{equation}\label{3.14}
  \langle x^*, v\rangle\leq \eta_2.
\end{equation}
(thanks to $x^*\in N_c(A, z)\cap B_{X^*}$). Since $v\in \overline{D_cF^{-1}(b, z)(\eta_1 B_Y)}$, there exist $y_n\in B_Y$ and $v_n\in D_cF^{-1}(b, z)(\eta_1y_n)$ such that $v_n\rightarrow v$. Then
$$
\langle (z^*, -y^*), (v_n, \eta_1y_n)\rangle\leq 0.
$$
This implies that $\langle z^*, v_n\rangle\leq \langle y^*, \eta_1y_n\rangle\leq \eta_1$ as $(y^*, y_n)\in  B_{Y^*}\times B_Y $, and consequently $\langle z^*, v\rangle\leq \eta_1$. Using \eqref{3.13} and \eqref{3.14}, one has $\langle v^*, v\rangle\leq 1$, which contradicts \eqref{3.12}.

(iv) Since $X$ is of finite dimension, it follows that $\hat N(S, z)=(T(S, z))^{\circ}$. Using the proof of (iii), one can verify that \eqref{3.9b} holds for any $\eta_1, \eta_2\in [0, +\infty)$ with $\eta_1+\eta_2\leq\frac{1}{\tau}$. The proof is complete.
\end{proof}

\begin{pro}
Let $z\in S$. Then the following inclusions are equivalent:

{\rm (i)} $N_c(S, z)\subset\overline{D_c^*F(z, b)(Y^*)+N_c(A, z)}^{w^*}$;

{\rm (ii)} $T_c(S, z)\supset D_cF^{-1}(b, z)(0)\cap T_c(A, z)$.
\end{pro}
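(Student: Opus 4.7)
The plan is to prove the equivalence via polar/bipolar duality, reducing both (i) and (ii) to a common intermediate inclusion. The two main tools I will use are the polar relations \eqref{3.7} together with the bipolar identity $C^{\ominus\ominus}=\overline{C}^{w^*}$ for convex cones containing the origin, and the classical sum-intersection formula $(C_1\cap C_2)^{\ominus}=\overline{C_1^{\ominus}+C_2^{\ominus}}^{w^*}$ for closed convex cones $C_1, C_2$ in a Banach space.

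The first key step will be the identity
$$
\overline{D_c^*F(z,b)(Y^*)}^{w^*}=\bigl(D_cF^{-1}(b,z)(0)\bigr)^{\ominus}.
$$
The map $(x,y)\mapsto (y,x)$ identifies ${\rm gph}(F)$ with ${\rm gph}(F^{-1})$, so
$$
u\in D_cF^{-1}(b,z)(0)\Longleftrightarrow (u,0)\in T_c({\rm gph}(F),(z,b)).
$$
Setting $L:=D_c^*F(z,b)(Y^*)$ and $K_0:=D_cF^{-1}(b,z)(0)$, the inclusion $L\subset K_0^{\ominus}$ is immediate from the definitions and the pairing $\langle(x^*,-y^*),(u,0)\rangle=\langle x^*,u\rangle$. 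For the reverse, an element $u$ lies in $L^{\ominus}$ iff $\langle x^*,u\rangle\le 0$ for every $(x^*,y^*)\in N_c({\rm gph}(F),(z,b))$, i.e.\ $(u,0)\in N_c({\rm gph}(F),(z,b))^{\ominus}=T_c({\rm gph}(F),(z,b))$, so $u\in K_0$. Since $L$ is a convex cone containing $0$, the bipolar theorem then gives $\overline{L}^{w^*}=L^{\ominus\ominus}=K_0^{\ominus}$.

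Combining this identity with the standard fact $\overline{A+B}^{w^*}=\overline{\overline{A}^{w^*}+B}^{w^*}$ and the sum-intersection formula applied to the closed convex cones $K_0$ and $T_c(A,z)$, the right-hand side of (i) rewrites as
$$
\overline{D_c^*F(z,b)(Y^*)+N_c(A,z)}^{w^*}=\bigl(D_cF^{-1}(b,z)(0)\cap T_c(A,z)\bigr)^{\ominus}.
$$
Hence (i) is equivalent to the intermediate inclusion $N_c(S,z)\subset \bigl(D_cF^{-1}(b,z)(0)\cap T_c(A,z)\bigr)^{\ominus}$.

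Taking negative polars of both sides of this intermediate inclusion reverses it; using $N_c(S,z)^{\ominus}=T_c(S,z)$ from \eqref{3.7} together with the bipolar theorem (noting that $D_cF^{-1}(b,z)(0)\cap T_c(A,z)$ is already a closed convex cone and thus coincides with its bipolar) yields $D_cF^{-1}(b,z)(0)\cap T_c(A,z)\subset T_c(S,z)$, which is (ii). The converse is the same argument run in reverse, since the right-hand side of the intermediate inclusion is a weak$^*$-closed convex cone, so passing from it back to an inclusion of normal cones loses nothing. I expect the main technical hurdle to be establishing $L^{\ominus}=K_0$, since it requires both unpacking the coderivative of $F^{-1}$ via the coordinate swap and a careful application of the bipolar theorem at the level of ${\rm gph}(F)$; once this is in hand, everything else reduces to formal manipulations of polars.
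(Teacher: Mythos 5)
Your proposal is correct and follows essentially the same route as the paper: both hinge on the identity $D_cF^{-1}(b,z)(0)=\bigl(D_c^*F(z,b)(Y^*)\bigr)^{\ominus}$, then apply the sum--intersection polar formula for closed convex cones and conclude via the duality $T_c(S,z)^{\ominus}=N_c(S,z)$. The only cosmetic difference is that the paper proves the key identity by an explicit separation argument, whereas you derive it directly from the polarity $N_c({\rm gph}(F),(z,b))^{\ominus}=T_c({\rm gph}(F),(z,b))$ of \eqref{3.7}, which is equivalent.
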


\begin{proof}
We first prove that
\begin{equation}\label{3.15}
  D_cF^{-1}(b, z)(0)=\big(D_c^*F(z, b)(Y^*)\big)^{\circ}.
\end{equation}

Indeed,  the inclusion of \eqref{3.15} holds trivially. Next, we prove the inverse inclusion of \eqref{3.15}. Suppose to the contrary that there exists $h\in\big(D_c^*F(z, b)(Y^*)\big)^{\circ}$ such that $h\not\in D_cF^{-1}(b, z)(0)$; that is, $(h, 0)\not\in T_c({\rm gph}(F), (z, b))$. Applying the seperation theorem, there exists $(h^*, y^*)\in X^*\times Y^*$ with $\|(h^*, y^*)\|=1$ such that
\begin{equation}\label{3.16}
  \langle h^*, h\rangle>\sup\{\langle (h^*, y^*), (u, v)\rangle : (u, v)\in T_c({\rm gph}(F), (z, b))\}=0
\end{equation}
This means that $(h^*, y^*)\in N_c({\rm gph}(F), (z, b))$ and consequently $h^*\in D_c^*F(z, b)(Y^*)$. Hence $\langle h^*, h\rangle\leq 0$ as $D_c^*F(z, b)(Y^*)$ is a cone, which contradicts \eqref{3.16}.

Using \eqref{3.15}, \cite[Chapter IV, Theorem 1.5]{24} and \cite[Lemma 4.1]{BFL}, one can verify that
\begin{eqnarray*}
\big(D_cF^{-1}(b, z)(0)\cap T_c(A, z)\big)^{\circ}&=&\overline{D_cF^{-1}(b, z)(0)^{\circ}+T_c(A, z)^{\circ}}^{w^*}\\
&=&\overline{\overline{D_c^*F(z, b)(Y^*)}^{w^*}+N_c(A, z)}^{w^*}\\
&=&\overline{D_c^*F(z, b)(Y^*)+N_c(A, z)}^{w^*}
\end{eqnarray*}
and
\begin{eqnarray*}
\big(\overline{D_c^*F(z, b)(Y^*)+N_c(A, z)}^{w^*}\big)^{\circ}&=&\big(D_c^*F(z, b)(Y^*)+N_c(A, z)\big)^{\circ}\\
&=&D_cF^{-1}(b, z)(0)\cap T_c(A, z).
\end{eqnarray*}
This means that the equivalence of (i) and (ii) follows. The proof is complete.
\end{proof}

We close this section with the following corollary which is one necessary condition of BCQ for (GEC) and immediate from Proposition 3.1.

\begin{coro}
Let $z\in S$. If (GEC) has the BCQ at $z$, then
$$
D_cF^{-1}(b, z)(0)\cap T_c(A, z)\subset T_c(S, z).
$$
\end{coro}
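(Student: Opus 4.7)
The plan is to derive this corollary as an immediate consequence of Proposition 3.1. The BCQ at $z$ in the sense of Clarke is the inclusion
\[
N_c(S,z) \subset D_c^*F(z,b)(Y^*) + N_c(A,z),
\]
while condition (i) of Proposition 3.1 only requires
\[
N_c(S,z) \subset \overline{D_c^*F(z,b)(Y^*) + N_c(A,z)}^{w^*}.
\]
So the first (and essentially only) step is the trivial observation that any set is contained in its weak$^*$-closure, and hence BCQ at $z$ implies condition (i) of Proposition 3.1.

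Having verified (i), I would simply invoke the equivalence (i)$\Leftrightarrow$(ii) from Proposition 3.1 to conclude condition (ii), which reads
\[
T_c(S,z) \supset D_cF^{-1}(b,z)(0) \cap T_c(A,z),
\]
and this is exactly the conclusion of the corollary.

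There is no genuine obstacle here: once Proposition 3.1 has been established, the corollary is a one-line consequence. The only subtlety worth flagging in the write-up is to make explicit that the ambient inclusion defining BCQ sits inside the weak$^*$-closure hypothesis of Proposition 3.1, so that the passage is fully justified without any additional topological or convexity assumption on $D_c^*F(z,b)(Y^*) + N_c(A,z)$.
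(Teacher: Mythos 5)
Your proposal is correct and matches the paper exactly: the paper gives no separate proof, stating only that the corollary is immediate from Proposition 3.1, and the intended argument is precisely yours---BCQ implies inclusion in the weak$^*$-closure, hence condition (i) of Proposition 3.1 holds, and the equivalence with (ii) yields the conclusion.
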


\setcounter{equation}{0}

\section{Applications to Metric Subregularity of (GEC)}
In this section, we mainly apply BCQ and strong BCQs studied in section 3 to metric subregularity of nonconvex (GEC) and aim to establish necessary and/or sufficient conditions for metric subregularity in terms of these constraint qualifications. We begin with the concept of metric subregularity.

Recall from \cite{9,7} that (GEC) is said to be metrically subregular at $a\in S$ if there exists $\tau\in (0 +\infty)$ such that
\begin{equation}\label{4.1a}
d(x, S)\leq\tau (d(b, F(x))+ d(x,
A))\ \ {\rm for\ all}\  x\ {\rm close\ to \ }a.
\end{equation}

First we establish the following proposition on metric subregularity of (GEC). This result was also obtained by the authors \cite{HYZ}. For the sake of completeness, we give its another different proof which is inspired by \cite[Theorem 3.1]{7}.

\begin{pro}
Let $a\in S$. Suppose that (GEC) is
metrically subregular at $a$. Then there exist $\tau, \delta\in (0, +\infty)$ such
that (GEC) has the strong BCQ of \eqref{3.4a} at all points in $S\cap B(a, \delta)$ with the same constant $\tau>0$.
\end{pro}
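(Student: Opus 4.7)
Fix $\tau_0, \delta_0 > 0$ from the metric subregularity hypothesis, and propose $\delta := \delta_0/4$ and $\tau := \tau_0$. Take any $z \in S \cap B(a, \delta)$ and any $z^* \in \hat N(S, z) \cap B_{X^*}$. The first step of the plan is to derive, for every $\varepsilon > 0$, the local pointwise estimate
$$\langle z^*, x - z\rangle \le (1+\varepsilon)\tau_0 \bigl(d(b, F(x)) + d(x, A)\bigr) + \varepsilon \|x - z\|\qquad (\ast)$$
for all $x$ in some ball around $z$ depending on $\varepsilon$. This follows by combining the Fr\'echet normal cone definition (which gives $\langle z^*, y - z\rangle \le \varepsilon\|y - z\|$ for $y \in S$ close to $z$) with an approximate projection step: for general $x$ close to $z$, choose $x_S \in S$ with $\|x - x_S\|$ close to $d(x, S)$, split $\langle z^*, x - z\rangle = \langle z^*, x - x_S\rangle + \langle z^*, x_S - z\rangle$, bound the first term by $\|x - x_S\|$ (using $\|z^*\| \le 1$), and dominate $d(x, S)$ by $\tau_0(d(b, F(x)) + d(x, A))$ via the metric subregularity inequality.

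Next, substitute $x = z + th$ in $(\ast)$, divide by $t$, and let $t \downarrow 0$. Using $d(b, F(z)) = 0 = d(z, A)$ and the fact that the lower Dini derivative is dominated by the Rockafellar--Clarke derivative $(\cdot)^{\circ}(z; h)$, this yields
$$\langle z^*, h\rangle \le (1+\varepsilon)\tau_0 \bigl(d(b, F(\cdot))^{\circ}(z; h) + d(\cdot, A)^{\circ}(z; h)\bigr) + \varepsilon \|h\|$$
for every $h \in X$. Letting $\varepsilon \downarrow 0$ and invoking the Moreau--Rockafellar sum rule for the two sublinear majorants in $h$ (valid because $d(\cdot, A)^{\circ}(z; \cdot)$ is continuous at the origin, as $d(\cdot, A)$ is $1$-Lipschitz), I obtain
$$z^* \in \tau_0 \bigl(\partial_c d(b, F(\cdot))(z) + \partial_c d(\cdot, A)(z)\bigr).$$
Standard Clarke calculus gives $\partial_c d(\cdot, A)(z) \subset N_c(A, z) \cap B_{X^*}$, and a marginal-function / graph-lifting argument gives $\partial_c d(b, F(\cdot))(z) \subset D_c^*F(z, b)(B_{Y^*})$: at $z$ with $b \in F(z)$, every Clarke subgradient arises from an element $(x^*, -y^*) \in N_c({\rm gph}(F), (z, b))$ with $\|y^*\| \le 1$. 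Combining these inclusions yields the strong BCQ of \eqref{3.4a} at $z$ with constant $\tau = \tau_0$, uniformly in $z \in S \cap B(a, \delta)$ since $\tau_0$ and $\delta_0$ depend only on the metric-subregularity data and not on the point $z$.

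The main obstacle is the marginal-function inclusion $\partial_c d(b, F(\cdot))(z) \subset D_c^*F(z, b)(B_{Y^*})$, because $d(b, F(\cdot))$ may fail to be locally Lipschitz or even locally finite when $F$ is a general closed multifunction (for instance, $F(x)$ might be empty for $x$ arbitrarily close to $z$). The standard remedy is to work with the $1$-Lipschitz distance-to-graph function $(x, y) \mapsto d((x, y), {\rm gph}(F))$ on $X \times Y$, apply the chain rule along the linear embedding $x \mapsto (x, b)$, and transfer the resulting Clarke subgradient information back to $d(b, F(\cdot))$ using that both distance functions vanish at $z$. A secondary subtlety is managing the $(1+\varepsilon)$ factor through the sum rule and the $\varepsilon \downarrow 0$ limit, which is routine because both sides of the target inclusion are weak$^*$-closed convex sets.
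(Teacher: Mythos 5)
There is a genuine gap, and it sits exactly at the point you flag as ``the main obstacle.'' Your first step $(\ast)$ is fine (it is essentially the paper's combination of $z^*\in\hat{\partial}d(\cdot,S)(z)$ with the subregularity inequality), but everything after it is carried out for the function $x\mapsto d(b,F(x))$, which for a general closed multifunction is neither locally Lipschitz nor even finite near $z$. The two analytic facts you invoke --- that the lower Dini quotient $\liminf_{t\downarrow 0}t^{-1}d(b,F(z+th))$ is dominated by the Rockafellar--Clarke derivative, and that $h\mapsto d(b,F(\cdot))^{\circ}(z;h)$ is the support function of $\partial_c d(b,F(\cdot))(z)$ so that Moreau--Rockafellar applies --- are Lipschitz-function facts (cf.\ the formula \eqref{2.2} valid only in the Lipschitz case); for an extended-real-valued lsc function the Rockafellar derivative involves an infimum over perturbed directions and a limsup over points with convergent function values, and neither claim survives. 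So the chain $(\ast)\Rightarrow$ directional inequality $\Rightarrow z^*\in\tau_0(\partial_c d(b,F(\cdot))(z)+\partial_c d(\cdot,A)(z))$ is not justified as written.

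Your proposed remedy --- pass to the $1$-Lipschitz graph distance --- is indeed the right idea, but it cannot be effected by a ``transfer back'' argument, because the inequality goes the wrong way: $d((x,b),{\rm gph}(F))\leq d(b,F(x))$, so substituting the graph distance into the right-hand side of the subregularity estimate \emph{weakens} that side and the estimate does not follow for free. What is actually needed is a \emph{strengthened} subregularity inequality with the graph distance on the right, namely
$$
d(x,S)\leq \tau\big(d_{\|\cdot\|_{\tau}}((x,y),{\rm gph}(F))+\|y-b\|+d(x,A)\big)\qquad\forall\,(x,y)\in B(a,\delta)\times Y,
$$
which the paper proves by a separate contradiction argument (inequality \eqref{4.3}): one picks a near-minimizing $u$ for the graph distance and plays $\|u-x_0\|$ against $d(x_0,S)$ via the triangle inequality. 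That argument relies crucially on the weighted product norm $\|(x,y)\|_{\tau}=\frac{\tau+1}{\tau}\|x\|+\|y\|$, whose dual ball $(\frac{\tau}{\tau+1}B_{X^*})\times B_{Y^*}$ is also what ultimately forces the $Y^*$-component of the resulting Clarke normal into $B_{Y^*}$ and hence yields $D_c^*F(z,b)(B_{Y^*})$ in \eqref{3.4a}. Once \eqref{4.3} is in hand, the paper avoids your directional-derivative detour entirely: the estimate exhibits $(\frac{z^*}{\tau},0)$ directly as a Fr\'echet (hence Clarke) subgradient of the Lipschitz function $\phi(x,y)=d_{\|\cdot\|_{\tau}}((x,y),{\rm gph}(F))+\|y-b\|+d(x,A)$ at $(z,b)$, and Clarke's sum rule finishes the proof. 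To repair your write-up you would need to prove an analogue of \eqref{4.3} and then run your sum-rule argument on the product space for $\phi$; the sketch as given does not contain that step.
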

\begin{proof}
Since (GEC) is metrically subregular at $a$, there exist
$\tau, r\in (0, +\infty)$ such that
\begin{equation}\label{4.1}
d(x, S)\leq\tau (d(b, F(x))+ d(x,
A))\ \ \forall x\in B(a, r).
\end{equation}
For any $(x, y)\in X\times
Y$, let $\|(x, y)\|_{\tau}:=\frac{\tau+1}{\tau}\|x\|+\|y\|$. Clearly $\|\cdot\|_{\tau}$ is a norm on $X\times
Y$ inducing the product topology, and furthermore the unit ball of
dual space of $(X\times Y, \|\cdot\|_{\tau})$ is
$(\frac{\tau}{\tau+1}B_{X^*})\times B_{Y^*}$. Let $\delta:=\frac{r}{2}$. We claim that
\begin{equation}\label{4.3}
d(x, S)\leq \tau(d_{\|\cdot\|_{\tau}}((x, y), {\rm gph}(F))+\|y-b\|+d(x,
A))\,\,\,\,\forall (x, y)\in B(a, \delta)\times Y.
\end{equation}
Indeed, suppose to the contrary that there exists $(x_0, y_0)\in B(a, \delta)\times Y$ such
that
$$
d(x_0, S)> \tau(d_{\|\cdot\|_{\tau}}((x_0, y_0),
{\rm gph}(F))+\|y_0-b\|+d(x_0, A))
$$
This implies that there exists $u\in X$ such that
$$
d(x_0, S)> \tau(\frac{\tau+1}{\tau}\|u-x_0\|+d(y_0, F(u))+\|y_0-b\|+d(x_0, A)).
$$
Thus,
$$
d(x_0, S)>\|u-x_0\|+\tau(d(b, F(u))+d(u, A)).
$$
Since
$$
\|u-a\|\leq\|u-x_0\|+\|x_0-a\|<d(x_0, S)+\|x_0-a\|\leq
2\|x_0-a\|<r,
$$
using \eqref{4.1}, one has
$$
d(x_0, S)>\|u-x_0\|+d(u, S)\geq d(x_0, S),
$$
which is a contradiction. Hence \eqref{4.3} holds.

 Let $z\in B(a, \delta)\cap S$ and $x^*\in \hat N(S,
z)\cap B_{X^*}$. Then $x^*\in\hat{\partial}d(\cdot, S)(z)$ by \cite[Corollary 1.96]{17} and thus for any
$\varepsilon>0$ there exits $\delta_1\in (0, \delta)$ such that
$$
\langle x^*, x-z\rangle\leq d(x,
S)+\tau\varepsilon\|x-z\|\,\,\,\,\forall x\in B(z, \delta_1).
$$
Noting that $B(z, \delta_1)\subset B(a, r)$, it follows from \eqref{4.3}
that for any $x\in B(z, \delta_1)$,
$$
\langle x^*, x-z\rangle\leq\tau(d_{\|\cdot\|_{\tau}}((x, y),
{\rm gph}(F))+\|y-b\|+d(x, A))+\tau\varepsilon\|x-z\|.
$$
This means
$$(\frac{x^*}{\tau}, 0)\in \hat{\partial}\phi(z,
b)\subset \partial_c\phi(z, b),
$$
where $\phi(x, y):=d_{\|\cdot\|_{\tau}}((x, y),
{\rm gph}(F))+\|y-b\|+d(x, A)$. On the other hand, applying \cite[Proposition 2.4.2 and Theorem 2.9.8]{12}, one has
\begin{eqnarray*}
\partial_c\phi(z, b)&\subset&\partial_cd_{\|\cdot\|_{\tau}}((\cdot, \cdot),
{\rm gph}(F))(z, b)+\{0\}\times B_{Y^*}+N_c(A, z)\cap B_{X^*}\times\{0\}\\
&\subset&N_c({\rm gph}(F), (z, b))+\{0\}\times B_{Y^*}+N_c(A, z)\cap B_{X^*}\times\{0\}.
\end{eqnarray*}
This means that $x^*\in \tau(D_c^*F(z,
b)(B_{Y^*})+N_c(A, z)\cap B_{X^*})$. The proof is complete.
\end{proof}

The following theorems show that the necessary condition for metric subregularity in Proposition 4.1 can be strengthened in finite-dimensional space and Asplund space setting.

\begin{them}
Let $X$ be of finite dimension and $a\in
S$. Suppose that (GEC) is metrically subregular at $a$. Then there
exist $\tau, \delta\in (0, +\infty)$ such that (GEC) has the strong BCQ of \eqref{3.4b} at all points in $S\cap B(a, \delta)$ with the same constant $\tau>0$.

\end{them}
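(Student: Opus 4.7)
The plan is to upgrade Proposition 4.1 from the Fréchet strong BCQ \eqref{3.4a} to the Mordukhovich strong BCQ \eqref{3.4b} by invoking the Asplund-space representation $N(S, z) = \mathop{\rm Limsup}_{x \stackrel{S}{\to} z}\hat N(S, x)$ recalled in \eqref{2.2a}, together with finite-dimensional norm compactness in $X^*$ and weak$^*$ compactness in $Y^*$.

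I would first apply Proposition 4.1 to fix constants $\tau, \delta > 0$ so that \eqref{3.4a} holds at every point of $S \cap B(a, \delta)$. Working on the smaller ball $S \cap B(a, \delta/2)$, pick $z$ in that ball and $x^* \in N(S, z) \cap B_{X^*}$; the representation above yields $x_n \in S$ with $x_n \to z$ and $x_n^* \in \hat N(S, x_n)$ with $x_n^* \to x^*$ (norm convergence, since $X^*$ is finite-dimensional). After rescaling to $\tilde x_n^* := x_n^*/\max(1, \|x_n^*\|)$, which lies in $\hat N(S, x_n) \cap B_{X^*}$ and still satisfies $\tilde x_n^* \to x^*$, I apply \eqref{3.4a} at $x_n$ to extract $y_n^* \in B_{Y^*}$, $z_n^* \in D_c^*F(x_n, b)(y_n^*)$, and $w_n^* \in N_c(A, x_n) \cap B_{X^*}$ with $\tilde x_n^* = \tau(z_n^* + w_n^*)$. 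Since $z_n^* = \tilde x_n^*/\tau - w_n^*$ is norm-bounded, I can pass to a subsequence along which $z_n^* \to z^*$ and $w_n^* \to w^*$ in $X^*$, while Banach--Alaoglu produces $y_n^* \stackrel{w^*}{\to} y^*$ in $B_{Y^*}$ (a subnet if $Y$ is not separable). Summing gives $x^* = \tau(z^* + w^*)$, so only the limit-stability of Clarke normal cones remains.

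The main obstacle is precisely this limit stability, i.e., verifying $(z^*, -y^*) \in N_c({\rm gph}(F), (z, b))$ and $w^* \in N_c(A, z) \cap B_{X^*}$ under mixed norm/weak$^*$ convergence. I plan to reduce it to the lower semicontinuity of the Clarke tangent cone $T_c$: for any $(u, v) \in T_c({\rm gph}(F), (z, b))$ there exist $(u_n, v_n) \in T_c({\rm gph}(F), (x_n, b))$ with $(u_n, v_n) \to (u, v)$ in norm, so the inequality $\langle z_n^*, u_n\rangle - \langle y_n^*, v_n \rangle \leq 0$ passes to the limit via strong convergence of $(z_n^*, u_n)$ in the finite-dimensional product, together with the estimate $|\langle y_n^*, v_n\rangle - \langle y^*, v\rangle| \leq \|y_n^*\|\,\|v_n - v\| + |\langle y_n^*-y^*, v\rangle|$, both of whose terms vanish. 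The parallel argument handles $w^* \in N_c(A, z)$, and $\|w^*\| \leq 1$ follows from norm lower semicontinuity. Combining these memberships with $x^* = \tau(z^* + w^*)$ completes the Mordukhovich strong BCQ \eqref{3.4b} at $z$ with the same constant $\tau$, proving the theorem.
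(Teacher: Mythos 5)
Your overall architecture (finite-dimensional approximation of $N(S,z)$ by Fr\'echet normals at nearby points of $S$, a decomposition at those points, and a compactness passage to the limit) matches the paper's, but the step you yourself flag as ``the main obstacle'' is a genuine gap, and the fix you propose does not close it. To pass $w_n^*\in N_c(A,x_n)$ and $(z_n^*,-y_n^*)\in N_c(\mathrm{gph}(F),(x_n,b))$ to the limit you need the Clarke normal cone multifunction to be outer semicontinuous along your sequences (equivalently, the Clarke tangent cone to be inner semicontinuous), and you simply assert the latter: that every $(u,v)\in T_c(\mathrm{gph}(F),(z,b))$ is a limit of vectors $(u_n,v_n)\in T_c(\mathrm{gph}(F),(x_n,b))$. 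The known Cornet--Penot--Rockafellar formula states $T_c(A,a)=\liminf_{x\stackrel{A}{\to} a}T(A,x)$ with the \emph{contingent} cones on the right; since $T_c(A,x)\subset T(A,x)$, this yields only $\liminf_{x\to a}T_c(A,x)\subset T_c(A,a)$, i.e., the opposite of the inclusion you need. Inner semicontinuity of $T_c(A,\cdot)$ itself is not a general property. Dually, $N_c(A,x)=\overline{\mathrm{co}}^{w^*}N(A,x)$, and although the limiting normal cone $N(A,\cdot)$ does have closed graph, a bounded element of $\overline{\mathrm{co}}\,N(A,x_n)$ may only be representable by unboundedly large elements of $N(A,x_n)$ whose sum stays bounded by cancellation, so convexification can destroy graph-closedness; nothing in your argument rules this out, and the boundedness of $w_n^*$ and $z_n^*$ does not help.

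The paper's proof is organized precisely to sidestep this. Instead of invoking Proposition 4.1 at nearby points, it rederives from the subregularity estimate \eqref{4.3} the inclusion $(\frac{z_n^*}{\tau},0)\in\hat{\partial}\phi(z_n,b)\subset\partial_c\phi(z_n,b)$ for $\phi(x,y)=d_{\|\cdot\|_{\tau}}((x,y),\mathrm{gph}(F))+\|y-b\|+d(x,A)$, and then decomposes via the Clarke sum rule into pieces taken from $\partial_cd_{\|\cdot\|_{\tau}}((\cdot,\cdot),\mathrm{gph}(F))(z_n,b)$, $\{0\}\times B_{Y^*}$ and $\partial_cd(\cdot,A)(z_n)\times\{0\}$. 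These are Clarke subdifferentials of globally Lipschitz functions: they are norm-bounded and weak$^*$ outer semicontinuous by \cite[Proposition 2.1.5]{12}, so their limits land in $\partial_cd_{\|\cdot\|_{\tau}}((\cdot,\cdot),\mathrm{gph}(F))(z,b)\subset N_c(\mathrm{gph}(F),(z,b))$ and $\partial_cd(\cdot,A)(z)\subset N_c(A,z)\cap B_{X^*}$. If you wish to keep your outline, you must either replace the strong BCQ decomposition at $x_n$ by this distance-function decomposition, or supply an actual proof of the outer semicontinuity of the Clarke normal cones along the sequences $(x_n,b)\to(z,b)$ and $x_n\to z$; as it stands that step is unsupported.
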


\begin{proof}
Choose $\tau, r\in (0, +\infty)$ such that \eqref{4.1} holds. As the proof in
Proposition 4.1, by defining $\|(x,
y)\|_{\tau}:=\frac{\tau+1}{\tau}\|x\|+\|y\|$ for any $(x, y)\in
X\times Y$, we have that \eqref{4.3} holds wiht $\tau>0$ and $\delta:=\frac{r}{2}>0$. Take any $z\in B(a,
\delta)\cap S$ and $z^*\in N(S, z)\cap B_{X^*}$. Since $X$
is of finite dimension, there exist $z_n\stackrel S\rightarrow z$ and
$\hat z_n^*\rightarrow z^*$ such that $\hat z_n^*\in \hat N(S,
z_n)$. Set $z_n^*:=\frac{\hat z_n^*}{\|\hat z_n^*\|}$. Then $z_n^*\in \hat N(S, z_n)\cap B_{X^*}$ and $z_n^*\rightarrow \frac{z^*}{\|z^*\|}$. Let $\phi(x, y):=d_{\|\cdot\|_{\tau}}((x, y),
{\rm gph}(F))+\|y-b\|+d(x, A)$. Then, for any $n\in
\mathbb{N}$ sufficiently large, there exists $r_n\in (0, \delta-\|z_n-a\|)$
such that
\begin{equation}\label{4.7}
\langle z_n^*, x-z_n\rangle\leq d(x, S)+\tau\varepsilon\|x-z_n\|.
\end{equation}
holds for any $x\in B(z_n, r_n)$. Using \eqref{4.3} and \eqref{4.7}, one has
$$
\langle z_n^*, x-z_n\rangle\leq\tau(\phi(x, y)-\phi(z_n,
b))+\tau\varepsilon\|(x-z_n, y-b)\|_{\tau}\ \ \forall (x, y)\in B(z_n,
r_n)\times B(b, r_n),
$$
and consequently
$$(\frac{z_n^*}{\tau}, 0)\in
\hat{\partial}\phi(z_n, b)\subset\partial_c\phi(z_n, b).
$$
This and \cite[Theorem 2.9.8]{12} imply that there exist $a_n^*\in \partial_cd(\cdot,
A)(z_n)$, $b_n^*\in B_{Y^*}$ and $(x_n^*, y_n^*)\in \partial_cd_{\|\cdot\|_{\tau}}((\cdot,\cdot), {\rm gph}(F))(z_n, b)$ such that
\begin{equation}\label{4.8}
(\frac{z_n^*}{\tau}, 0)=(x_n^*, y_n^*)+(a_n^*, 0)+(0, b_n^*).
\end{equation}
Since $\{(x_n^*, y_n^*)\}$ and
$\{a_n^*\}$ are bounded, without loss of generalization (considering
subnet if necessary), we can assume that
$$
(x_n^*, y_n^*)\stackrel{w^*}\rightarrow(x^*, y^*),
a_n^*\rightarrow
a^*\,\,\,\mathrm{and}\,\,\,b_n^*\stackrel{w^*}\rightarrow b^*\in
B_{Y^*}.
$$
By virtue of \cite[Proposition 2.1.5 and Theorem 2.9.8]{12}, one has
$$
(x^*, y^*)\in
\partial_cd_{\|\cdot\|_{\tau}}((\cdot,\cdot), {\rm gph}(F))(z, b)\subset N_c({\rm gph}(F), (z, b))\  \mathrm{and}\ a^*\in \partial_cd(\cdot,
A)(z).
$$
Taking limits in \eqref{4.8}
with respect to the weak$^*$-topology, one has
$$
(\frac{z^*}{\tau\|z^*\|}, 0)=(x^*, y^*)+(a^*, 0)+(0, b^*).
$$
This implies that $z^*\in \tau(D_c^*F(z, b)(B_{Y^*})+N_c(A, z)\cap
B_{X^*})$ as $\|z^*\|\leq 1$. The proof is complete.
\end{proof}

\begin{them}
Let $X$ be of finite dimension, $Y$ be an Asplund space, and let $a\in S$. Suppose that (GEC) is
metrically subregular at $a$. Then there exist $\tau, \delta\in (0, +\infty)$ such
that
\begin{equation}\label{4.9}
N(S, z)\cap B_{X^*}\subset\tau(D^*F(z, b)(B_{Y^*})+N(A, z)\cap
B_{X^*})\,\,\,\,\forall z\in B(a, \delta)\cap S.
\end{equation}
\end{them}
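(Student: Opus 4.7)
The plan is to adapt the proof of Theorem 4.1 by replacing Clarke subdifferential calculus with Fr\'echet/Mordukhovich calculus, which becomes available because the product $X\times Y$ is Asplund (finite-dimensional $\times$ Asplund is Asplund) and because in finite dimensions every Mordukhovich normal at $z$ is a norm-limit of Fr\'echet normals at nearby points. The three ingredients beyond the proof of Proposition 4.1 are therefore: the sequential representation $N(S,z)=\mathop{\rm Limsup}_{x\stackrel S\to z}\hat N(S,x)$, a limiting sum rule for three Lipschitz summands on the Asplund space $X\times Y$, and weak$^*$-sequential compactness of $B_{Y^*}$.

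First I would repeat the set-up of Proposition 4.1: fix $\tau,r>0$ realizing metric subregularity, set $\|(x,y)\|_\tau:=\frac{\tau+1}{\tau}\|x\|+\|y\|$ and $\delta:=r/2$, and re-derive inequality (4.3). Next, fix $z\in B(a,\delta)\cap S$ and $z^*\in N(S,z)\cap B_{X^*}$. By finite-dimensionality of $X$ choose $z_n\stackrel S\to z$ and $\hat z_n^*\in \hat N(S,z_n)$ with $\hat z_n^*\to z^*$ in norm. As in Theorem 4.1, $\hat z_n^*\in\hat\partial d(\cdot,S)(z_n)$, so combining with (4.3) gives
$$
\Bigl(\tfrac{\hat z_n^*}{\tau},0\Bigr)\in\hat\partial \phi(z_n,b),\quad \phi(x,y):=d_{\|\cdot\|_\tau}((x,y),{\rm gph}(F))+\|y-b\|+d(x,A).
$$

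The core step is then to invoke the Mordukhovich sum rule on the Asplund space $X\times Y$ (passing through $\hat\partial\subset\partial$), which, together with the standard bounds $\partial d(\cdot,A)(z_n)\subset N(A,z_n)\cap B_{X^*}$ and $\partial d_{\|\cdot\|_\tau}((\cdot,\cdot),{\rm gph}(F))(z_n,b)\subset N({\rm gph}(F),(z_n,b))$ and $\partial\|\cdot-b\|(b)\subset B_{Y^*}$, yields the decomposition
$$
\tfrac{\hat z_n^*}{\tau}=x_n^*+a_n^*,\ (x_n^*,-y_n^*)\in N({\rm gph}(F),(z_n,b)),\ y_n^*\in B_{Y^*},\ a_n^*\in N(A,z_n)\cap B_{X^*}.
$$
Finally I would pass to the limit: $\{a_n^*\}\subset B_{X^*}$ is relatively norm-compact in the finite-dimensional $X^*$, hence (along a subsequence) $a_n^*\to a^*\in N(A,z)\cap B_{X^*}$; then $x_n^*\to z^*/\tau-a^*$ in norm; and since $Y$ is Asplund, $B_{Y^*}$ is weak$^*$-sequentially compact, giving a further subsequence with $y_n^*\stackrel{w^*}\to y^*\in B_{Y^*}$. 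Robustness of the Mordukhovich normal cone then gives $(z^*/\tau-a^*,-y^*)\in N({\rm gph}(F),(z,b))$, i.e.\ $z^*/\tau-a^*\in D^*F(z,b)(y^*)\subset D^*F(z,b)(B_{Y^*})$, which is (4.9).

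The main obstacle I anticipate is the sum-rule step: one needs the Asplund-space limiting sum rule applied carefully to three Lipschitz distance-type functions evaluated at the base point $(z_n,b)\in{\rm gph}(F)\cap(A\times\{b\})$, and in particular the bounds on the components $y_n^*\in B_{Y^*}$ and $a_n^*\in B_{X^*}$ must be read off from the Lipschitz constants so that the final inclusion has exactly the form $\tau(D^*F(z,b)(B_{Y^*})+N(A,z)\cap B_{X^*})$; once this clean decomposition is secured, the passage to the limit reduces to the weak$^*$-sequential compactness argument above.
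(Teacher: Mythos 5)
Your overall architecture (reduce to $\hat N(S,z_n)$ by finite-dimensionality of $X$, decompose $(\hat z_n^*/\tau,0)$ via a sum rule, pass to the limit) matches the paper's, but the core step contains a genuine gap: you apply the \emph{exact} Mordukhovich sum rule at each $(z_n,b)$ and thereby obtain \emph{limiting} objects $(x_n^*,-y_n^*)\in N({\rm gph}(F),(z_n,b))$ and $a_n^*\in N(A,z_n)$, and you then invoke ``robustness of the Mordukhovich normal cone'' to conclude $(x^*,-y^*)\in N({\rm gph}(F),(z,b))$ from $x_n^*\to x^*$, $y_n^*\stackrel{w^*}\to y^*$, $(z_n,b)\to(z,b)$. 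This robustness, i.e.\ the inclusion $\mathop{\rm Limsup}_{u\to \bar u}N(\Omega,u)\subset N(\Omega,\bar u)$, is available when the ambient space is finite dimensional (so it is fine for $a_n^*\in N(A,z_n)$ in $X$), but it is \emph{not} valid in general for $\Omega={\rm gph}(F)\subset X\times Y$ with $Y$ an arbitrary Asplund space: each $(x_n^*,-y_n^*)$ is itself only a weak$^*$ sequential limit of Fr\'echet normals at points near $(z_n,b)$, and the double limit cannot be diagonalized because the weak$^*$ topology on $B_{Y^*}$ is not metrizable unless $Y$ is separable. In other words, your argument takes a weak$^*$ limit of weak$^*$ limits, which can land strictly outside $N({\rm gph}(F),(z,b))$.

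The paper avoids this by staying at the Fr\'echet level until the very last step: it applies the \emph{fuzzy} (approximate) sum rule for Fr\'echet subdifferentials at $(z_n,b)$ to get $(x_n^*,y_n^*)\in\hat\partial\phi(x_n,y_n)$, $a_n^*\in\hat\partial d(\cdot,A)(a_n)$ at nearby points up to a $\tfrac{2}{n}(B_{X^*}\times B_{Y^*})$ error, and then uses Lemma 2.2 to replace these Fr\'echet subgradients of distance functions by genuine Fr\'echet normals $(\widetilde x_n^*,\widetilde y_n^*)\in\hat N({\rm gph}(F),(\widetilde x_n,\widetilde y_n))$ and $\widetilde a_n^*\in\hat N(A,\widetilde a_n)$ with $(\widetilde x_n,\widetilde y_n)\stackrel{{\rm gph}(F)}\to(z,b)$ and $\widetilde a_n\stackrel{A}\to z$. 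Then a \emph{single} weak$^*$ sequential limit of Fr\'echet normals is taken, which lies in $N({\rm gph}(F),(z,b))$ and $N(A,z)$ by the very definition $N=\mathop{\rm Limsup}\hat N$ in Asplund spaces. To repair your proof you should replace the exact sum rule by this fuzzy-sum-rule-plus-Lemma 2.2 route (or else add the assumption that $Y$ is separable, which weakens the theorem). A secondary, easily fixed point: you should normalize $\hat z_n^*$ before claiming $\hat z_n^*\in\hat\partial d(\cdot,S)(z_n)$, since $\hat\partial d(\cdot,S)(z_n)=\hat N(S,z_n)\cap B_{X^*}$ and $\|\hat z_n^*\|$ need not be $\le 1$.
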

\begin{proof}
Choose $\tau, r\in (0, +\infty)$ such that \eqref{4.1} holds. Set $\delta:=\frac{r}{2}$, and let $z\in B(a,
\delta)\cap S$ and $z^*\in N(S, z)\cap B_{X^*}$. Since $X$
is of finite dimension, there exist $z_n\stackrel S\rightarrow z$ and
$\hat z_n^*\rightarrow z^*$ such that $\hat z_n^*\in \hat N(S,
z_n)$. Set $z_n^*:=\frac{\hat z_n^*}{\|\hat z_n^*\|}$. Then $z_n^*\in \hat N(S, z_n)\cap B_{X^*}$ and $z_n^*\rightarrow \frac{z^*}{\|z^*\|}$. Let $f(x, y):=\|y-b\|+d(x, A)$ and $\phi(x,
y):=d_{\|\cdot\|_{\tau}}((x, y), {\rm gph}(F))$. As the proof in Theorem
4.1, one has $ (\frac{z_n^*}{\tau}, 0)\in
\hat{\partial}(\phi+f)(z_n, b)$. Noting that $X$ is of finite dimension and $Y$ is Asplund
space, it follows from
\cite[Theorem 2.33]{17} that there exist $(x_n, y_n), (u_n, v_n)\in
B(z_n, \frac{1}{n})\times B(b, \frac{1}{n})$ with $|\phi(x_n,
y_n)-\phi(z_n, b)|<\frac{1}{n}$ and $|f(u_n, v_n)-f(z_n,
b)|<\frac{1}{n}$ such that
\begin{equation}\label{4.10}
(\frac{z_n^*}{\tau}, 0)\in \hat{\partial}\phi(x_n,
y_n)+\hat{\partial}f(u_n, v_n)+\frac{1}{n}(B_{X^*}\times B_{Y^*}).
\end{equation}
Applying \cite[Theorem 2.33]{17} again to $\hat{\partial}f(u_n, v_n)$,
there exist $(a_n, b_n)\in B(u_n, \frac{1}{n})\times B(v_n,
\frac{1}{n})$ such that
\begin{equation}\label{4.11}
\hat{\partial}f(u_n, v_n)\subset\hat{\partial}d(\cdot,
A)(a_n)\times\{0\}+\{0\}\times\partial\|\cdot-b\|(b_n)+\frac{1}{n}(B_{X^*}\times
B_{Y^*}).
\end{equation}
By \eqref{4.10} and \eqref{4.11}, there exist $(x_n^*,
y_n^*)\in\hat{\partial}\phi(x_n, y_n)$, $a_n^*\in
\hat{\partial}d(\cdot, A)(a_n)$ and $b_n^*\in
B_{Y^*}$ such that
\begin{equation}\label{4.12}
(\frac{z_n^*}{\tau}, 0)\in (x_n^*, y_n^*)+(a_n^*, 0)+(0,
b_n^*)+\frac{2}{n}(B_{X^*}\times B_{Y^*}).
\end{equation}

Let $(\bar x_n^*, \bar y_n^*):=\frac{\tau}{\tau+1}(x_n^*, y_n^*)$. As $(x_n^*, y_n^*)\in\hat{\partial}\phi(x_n, y_n)$,  Lemma 2.2 implies that there exist $(\widetilde x_n,
\widetilde{y}_n)\in {\rm gph}(F)$ and $(\hat x_n^*,
\hat{y}_n^*)\in \hat N_{\|\cdot\|_{\tau}}({\rm gph}(F), (\widetilde x_n, \widetilde y_n))$ such that
\begin{eqnarray}\label{4.13}
\|(\widetilde x_n, \widetilde{y}_n)-(x_n, y_n)\|< \phi(x_n, y_n)+\frac{1}{n}\ \ {\rm and} \ \
\|(\hat x_n^*, \hat{y}_n^*)-(x_n^*,
y_n^*)\|<\frac{1}{n}.
\end{eqnarray}
Note that
\begin{equation}\label{4.11a}
(\widetilde x_n^*, \widetilde{y}_n^*):=\frac{\tau}{\tau+1}(\hat{x}_n^*, \hat{y}_n^*)\in \hat N({\rm gph}(F), (\widetilde x_n, \widetilde y_n))
\end{equation}
thanks to $(\hat x_n^*,
\hat{y}_n^*)\in \hat N_{\|\cdot\|_{\tau}}({\rm gph}(F), (\widetilde x_n, \widetilde y_n))$ and consequently
\begin{equation}\label{4.12a}
  \|(\widetilde x_n^*, \widetilde{y}_n^*)-(\bar x_n^*,
\bar{y}_n^*)\|<\frac{\tau}{n(\tau+1)}.
\end{equation}

Applying Lemma 2.2 to $a_n^*\in \hat{\partial}d(\cdot, A)(a_n)$,
there exist $\widetilde a_n\in A$ and $\widetilde a_n^*\in \hat N(A,
\widetilde a_n)$ such that
\begin{equation}\label{4.14}
\|\widetilde a_n-a_n\|< d((a_n,
A)+\frac{1}{n}\,\,\,\mathrm{and}\,\,\, \|\widetilde
a_n^*-a_n^*\|<\frac{1}{n}
\end{equation}
Since $(\bar x_n^*, \bar y_n^*)\in \frac{\tau}{\tau+1}\hat{\partial}d_{\|\cdot\|_{\tau}}((\cdot,\cdot),
{\rm gph}(F))(x_n, y_n)\subset B_{X^*}\times B_{Y^*}$, $a_n^*\in
\hat{\partial}d(\cdot, A)(a_n)\subset B_{X^*}$, $b_n^*\in B_{Y^*}$
and $B_{X^*}\times B_{Y^*}$ is sequentially weak$^*$-compact (as $X$ is of finite dimension and $Y$ is an Asplund space),  without loss of generalization (consider subsequence if necessary), we can assume that
$$
(\bar x_n^*, \bar y_n^*)\stackrel{w^*}\rightarrow (x^*, y^*),
a_n^*\stackrel{w^*}\rightarrow a^*\in
B_{X^*}\,\,\,\mathrm{and}\,\,\,b_n^*\stackrel{w^*}\rightarrow b^*\in
B_{Y^*}.
$$
Using \eqref{4.13}-\eqref{4.14}, one has
$$(\widetilde x_n^*,
\widetilde y_n^*)\stackrel{w^*}\rightarrow (x^*, y^*),  \
\widetilde a_n^*\stackrel{w^*}\rightarrow a^*, \ (\widetilde x_n, \widetilde y_n)\stackrel{{\rm gph}(F)}\longrightarrow
(z, b) \ \ {\rm and}  \ \ \widetilde a_n\stackrel A\rightarrow z.
$$
This implies that $(x^*,
y^*)\in N({\rm gph}(F), (z, b))$ and $a^*\in N(A, z)\cap B_{X^*}$. Taking limits as $n\rightarrow\infty$ with respect to the
weak$^*$-topology in \eqref{4.12}, one has
\begin{equation}\label{4.15}
 (\frac{z^*}{\tau\|z^*\|}, 0)\in
\frac{\tau+1}{\tau}(x^*, y^*)+(a^*, 0)+(0, b^*).
\end{equation}
Since $b^*\in B_{Y^*}$ and $\frac{\tau+1}{\tau}(x^*, y^*)\in
N({\rm gph}(F), (z, b))$, it
follows from \eqref{4.15} that
$$
z^*\in \tau(DF(z, b)(B_{Y^*})+N(A, z)\cap B_{X^*})
$$
as $\|z^*\|\leq 1$. The proof is complete.
\end{proof}

Using the proof of Theorem 4.2, we have the following corollary.
\begin{coro}
Let $X, Y$ be Asplund spaces and $a\in S$. Suppose that (GEC) is
metrically subregular at $a$. Then there exist $\tau, \delta\in (0, +\infty)$ such
that
\begin{equation}\label{4.16}
  \hat N(S, z)\cap B_{X^*}\subset\tau(D^*F(z, b)(B_{Y^*})+N(A, z)\cap
B_{X^*})\,\,\,\,\forall z\in B(a, \delta)\cap S.
\end{equation}
\end{coro}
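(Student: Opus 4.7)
The plan is to mimic the proof of Theorem 4.2 verbatim, with one principal simplification: since the conclusion quantifies over $\hat N(S,z)$ rather than $N(S,z)$, there is no need for the finite-dimensional approximation step $\hat z_n^*\to z^*/\|z^*\|$ that opened Theorem 4.2. I would therefore begin by choosing $\tau,r\in(0,+\infty)$ from the metric subregularity at $a$, setting $\delta:=r/2$, endowing $X\times Y$ with the norm $\|(x,y)\|_\tau:=\tfrac{\tau+1}{\tau}\|x\|+\|y\|$, and establishing exactly as in Proposition 4.1 the inequality
\begin{equation*}
d(x,S)\le\tau\bigl(d_{\|\cdot\|_\tau}((x,y),\mathrm{gph}(F))+\|y-b\|+d(x,A)\bigr)\quad\forall (x,y)\in B(a,\delta)\times Y.
\end{equation*}
Fixing $z\in B(a,\delta)\cap S$ and $z^*\in\hat N(S,z)\cap B_{X^*}$, Corollary 1.96 of \cite{17} gives $z^*\in\hat\partial d(\cdot,S)(z)$; combining with the above inequality yields, for $\phi(x,y):=d_{\|\cdot\|_\tau}((x,y),\mathrm{gph}(F))$ and $f(x,y):=\|y-b\|+d(x,A)$,
\begin{equation*}
\bigl(\tfrac{z^*}{\tau},0\bigr)\in\hat\partial(\phi+f)(z,b).
\end{equation*}

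Next, since $X\times Y$ is Asplund (a product of Asplund spaces is Asplund), the Fréchet fuzzy sum rule \cite[Theorem 2.33]{17} produces $(x_n,y_n),(u_n,v_n)\to(z,b)$ (with the usual control of function values) and subgradients $(x_n^*,y_n^*)\in\hat\partial\phi(x_n,y_n)$, $\alpha_n^*\in\hat\partial f(u_n,v_n)$ satisfying $(\tfrac{z^*}{\tau},0)\in(x_n^*,y_n^*)+\alpha_n^*+\tfrac{1}{n}(B_{X^*}\times B_{Y^*})$. A second application of the fuzzy sum rule to $\hat\partial f(u_n,v_n)$ splits $\alpha_n^*$ into a distance-function piece $a_n^*\in\hat\partial d(\cdot,A)(a_n)$ and a norm piece $b_n^*\in B_{Y^*}$, exactly as in (4.10)--(4.12). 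I would then invoke Lemma 2.2 on both $(x_n^*,y_n^*)$ (after the harmless rescaling by $\tau/(\tau+1)$ to convert the $\|\cdot\|_\tau$-Fréchet normals into ordinary Fréchet normals) and on $a_n^*$, obtaining $(\widetilde x_n^*,\widetilde y_n^*)\in\hat N(\mathrm{gph}(F),(\widetilde x_n,\widetilde y_n))$ and $\widetilde a_n^*\in\hat N(A,\widetilde a_n)$ with $(\widetilde x_n,\widetilde y_n)\xrightarrow{\mathrm{gph}(F)}(z,b)$ and $\widetilde a_n\xrightarrow{A}z$.

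The final step is to pass to weak$^*$-limits. Using the Asplund characterization (2.1), namely $N=\mathop{\mathrm{Limsup}}\hat N$, together with the fact that bounded nets in the duals of Asplund spaces admit weak$^*$-convergent subnets (Banach--Alaoglu), I extract $(\widetilde x_n^*,\widetilde y_n^*)\xrightarrow{w^*}(x^*,y^*)$, $\widetilde a_n^*\xrightarrow{w^*}a^*$, $b_n^*\xrightarrow{w^*}b^*$ with $(x^*,y^*)\in N(\mathrm{gph}(F),(z,b))$, $a^*\in N(A,z)\cap B_{X^*}$, $b^*\in B_{Y^*}$, and take the weak$^*$-limit of the decomposition to obtain
\begin{equation*}
\bigl(\tfrac{z^*}{\tau},0\bigr)=\tfrac{\tau+1}{\tau}(x^*,y^*)+(a^*,0)+(0,b^*),
\end{equation*}
whence $z^*\in\tau(D^*F(z,b)(B_{Y^*})+N(A,z)\cap B_{X^*})$, which is (4.16). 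The main obstacle is the weak$^*$-limit extraction in the infinite-dimensional Asplund setting: in Theorem 4.2 finite-dimensionality of $X$ gave free sequential weak$^*$-compactness on $B_{X^*}$, whereas here I must carefully lean on the Asplund-space Limsup representation (2.1) and possibly pass through subnets to ensure all three weak$^*$-limits exist simultaneously while remaining compatible with the membership relations in $N(\mathrm{gph}(F),(z,b))$ and $N(A,z)$.
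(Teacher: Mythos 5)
Your proposal is correct and follows essentially the same route as the paper, which itself proves this corollary simply by the remark ``Using the proof of Theorem 4.2'': you rerun that argument, drop the finite-dimensional approximation of $z^*$ by nearby Fr\'echet normals (unnecessary since the hypothesis already gives $z^*\in\hat N(S,z)$), and replace finite-dimensional weak$^*$ compactness by the sequential weak$^*$ compactness of bounded sets in duals of Asplund spaces. The only cosmetic caveat is that you should extract weak$^*$-convergent \emph{subsequences} (available in the Asplund setting) rather than subnets, so that the sequential $\mathop{\rm Limsup}$ representation of the Mordukhovich normal cone applies directly.
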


\noindent{\bf Remark 4.1.} Theorem 4.3 and Corollary 4.1 are results on necessary conditions for metric subregularity of (GEC) in the Asplund and finite-dimensional spaces setting, and these necessary condition forms in \eqref{4.9} and \eqref{4.16} are similar to the strong BCQ of \eqref{3.4}, \eqref{3.4a} and \eqref{3.4b} for (GEC). It is an idea to consider and study these types of constraint qualifications for (GEC) as well as equivalent conditions for them.

Next, we focus on sufficient conditions for metric
subregularity given by constraint qualifications. It is known from the counterexample of \cite[Example 4.5]{9} that the general nonconvex (GEC) may not have metric subregularity even with the assumption of strong BCQ of \eqref{3.4} in the finite-dimensional space. Thus, we consider the (GEC) defined by an L-subsmooth multifunction and a submsooth subset. The following theorem is inspired by \cite[Theorem 4.4]{9} and similar to \cite[Theorem 4.3]{HYZ}. We give its proof for the sake of completeness.

\begin{them}
Let $a\in S$. Suppose that $F$ is
$\mathcal{L}$-subsmooth at
$(a, b)$, $A$ is subsmooth at $a$ and that there exist
$\tau, \delta\in (0, +\infty)$ such that (GEC) has the strong BCQ of \eqref{3.4} at all points in $S\cap B(a, \delta)$ with the same constant $\tau>0$. Then (GEC) is metrically subregular at $a$.

\end{them}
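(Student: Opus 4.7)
The natural strategy will be a proof by contradiction, using Lemma~2.1 to linearize the distance to $S$ and then feeding the strong BCQ decomposition into the two subsmoothness inequalities. Suppose (GEC) is not metrically subregular at $a$; then for every $n\in \mathbb{N}$ there exists $x_n\in B(a,1/n)\setminus S$ with
\[
\alpha_n:=d(x_n,S)>n\bigl(d(b,F(x_n))+d(x_n,A)\bigr),
\]
hence $d(b,F(x_n))<\alpha_n/n$, $d(x_n,A)<\alpha_n/n$, and $\alpha_n\to 0$.

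Fixing $\gamma\in(0,1)$, Lemma~2.1 applied to the closed set $S$ at the exterior point $x_n$ produces $z_n\in S$ and $z_n^*\in N_c(S,z_n)$ with $\|z_n^*\|=1$ and $\gamma\|x_n-z_n\|<\min\{d(x_n,S),\langle z_n^*,x_n-z_n\rangle\}$. Since $z_n\in S$, the sandwich $\alpha_n\le\|x_n-z_n\|<\alpha_n/\gamma$ forces $z_n\to a$; for $n$ large enough that $z_n\in S\cap B(a,\delta)$, the strong BCQ hypothesis at $z_n$ decomposes $z_n^*=\tau(p_n^*+q_n^*)$ with $y_n^*\in B_{Y^*}$, $p_n^*\in D_c^*F(z_n,b)(y_n^*)$ and $q_n^*\in N_c(A,z_n)\cap B_{X^*}$, so $\|p_n^*\|\le 1/\tau+1=:c$.

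For a fixed small $\varepsilon>0$ (to be chosen at the end), the two subsmoothness hypotheses control the two pieces of $\langle z_n^*,x_n-z_n\rangle$ separately. The subsmoothness of $A$ at $a$ together with its characterization recalled in the preliminaries yields $\langle q_n^*,x_n-z_n\rangle\le d(x_n,A)+\varepsilon\|x_n-z_n\|$ for large $n$. For the coderivative term, I will pick $y_n\in F(x_n)$ with $\|y_n-b\|\le (1+1/n)d(b,F(x_n))$, rewrite $p_n^*\in D_c^*F(z_n,b)(y_n^*)$ as $(-y_n^*,p_n^*)\in N_c({\rm gph}(F^{-1}),(b,z_n))$, normalize by $c$ to land inside $B_{Y^*}\times B_{X^*}$, and apply the $\mathcal{L}$-subsmoothness of $F$ at $(a,b)$ (equivalently, L-subsmoothness of $F^{-1}$ at $(b,a)$) to the test point $(y_n,x_n)\in{\rm gph}(F^{-1})$. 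This will produce a bound of the form
\[
\langle p_n^*,x_n-z_n\rangle\le \|y_n-b\|+c\varepsilon\bigl(\|y_n-b\|+\|x_n-z_n\|\bigr)+o(\|x_n-z_n\|).
\]

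Combining the two estimates with $\gamma\|x_n-z_n\|<\tau\bigl(\langle p_n^*,x_n-z_n\rangle+\langle q_n^*,x_n-z_n\rangle\bigr)$, dividing by $\|x_n-z_n\|\ge\alpha_n>0$ and using $\|y_n-b\|,\,d(x_n,A)\le 2\alpha_n/n$, I expect to arrive at $\gamma\le o(1)+\tau\varepsilon(c+1)$ as $n\to\infty$; selecting $\varepsilon<\gamma/[\tau(c+1)]$ then contradicts $\gamma>0$. I expect the main obstacle to be the $\mathcal{L}$-subsmoothness step: as formulated, the L-subsmooth inequality is anchored at the fixed reference $(b,a)$ rather than at the varying base $(b,z_n)$, so absorbing the shift $z_n-a$ into the $\varepsilon$-error cleanly will require combining the inequality at the test point $(y_n,x_n)$ with that at $(b,z_n)\in{\rm gph}(F^{-1})$, together with the observation $\|z_n-a\|\le\|x_n-a\|+\|x_n-z_n\|\to 0$. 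Once this technical bound is in place, the concluding estimate is routine.
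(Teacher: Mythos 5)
Your strategy is the paper's own proof in contrapositive clothing: Lemma 2.1 produces $z_n\in S$ and a unit normal $z_n^*\in N_c(S,z_n)$ with $\gamma\|x_n-z_n\|<\min\{d(x_n,S),\langle z_n^*,x_n-z_n\rangle\}$, the strong BCQ gives the decomposition $z_n^*=\tau(p_n^*+q_n^*)$, subsmoothness of $A$ handles the $q_n^*$-term, $\mathcal{L}$-subsmoothness of $F$ handles the $p_n^*$-term, and one divides by $\|x_n-z_n\|$ at the end. The paper runs the identical estimate directly for an arbitrary $x\in B(a,\delta_1)\setminus S$, with the explicit constant $\tau_1=\frac{(\tau+1)\varepsilon+\tau}{1-(2\tau+1)\varepsilon}$ and a separate case $F(x)\cap B(b,r)=\emptyset$; your sequential framing conveniently avoids that case since $d(b,F(x_n))\to 0$. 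All quantitative ingredients match, including the normalization $\|p_n^*\|\le 1+1/\tau=c$ before invoking $\mathcal{L}$-subsmoothness.

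The one step that would fail as written is exactly the one you flag. Your proposed repair --- combining the L-subsmooth inequality of $F^{-1}$ at the test point $(y_n,x_n)$ with the one at the test point $(b,z_n)$ --- does not yield the bound you need: the auxiliary inequality at $(b,z_n)$ reads $\langle p_n^*,z_n-a\rangle\le c\,\varepsilon\|z_n-a\|$, an \emph{upper} bound, whereas to pass from $\langle p_n^*,x_n-a\rangle$ to $\langle p_n^*,x_n-z_n\rangle=\langle p_n^*,x_n-a\rangle-\langle p_n^*,z_n-a\rangle$ you need a \emph{lower} bound on $\langle p_n^*,z_n-a\rangle$ (and $-(p_n^*,-y_n^*)$ need not lie in the normal cone, so you cannot just flip signs). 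Moreover, even a correction of order $\varepsilon\|z_n-a\|$ would not close the argument, since $\|z_n-a\|$ need not be $O(\|x_n-z_n\|)$ --- take $d(x_n,S)\ll\|x_n-a\|$. The actual resolution is that no shift is required: the $\mathcal{L}$-subsmooth inequality, in the form the paper actually applies in its display (4.17) (and as it appears in the Zheng--Ng source), is already anchored at the \emph{varying} base point, i.e. it reads $\langle v^*,y-b\rangle+\langle u^*,x-u\rangle\le\varepsilon(\|y-b\|+\|x-u\|)$ for $(v^*,u^*)\in N_c(\mathrm{gph}(F^{-1}),(b,u))\cap(B_{Y^*}\times B_{X^*})$ with $u\in F^{-1}(b)$ near $a$; the transcription in Section 2 with $x-a$ in place of $x-u$ is what misled you. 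With the inequality in that form, your estimate $\langle p_n^*,x_n-z_n\rangle\le\|y_n-b\|+c\varepsilon(\|y_n-b\|+\|x_n-z_n\|)$ is immediate and the rest of your argument closes as you describe.
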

\begin{proof} Let $\varepsilon\in (0,
\frac{1}{2\tau+1})$. Since $F$ is $\mathcal{L}$-subsmooth at $(a,
b)$ and $A$ is subsmooth at $a$, there exists $r\in (0, \delta)$
such that
\begin{equation}\label{4.17}
\langle (v^*, u^*), (y-b, x-u)\rangle\leq
\varepsilon(\|y-b\|+\|x-u\|)
\end{equation}
and
\begin{equation}\label{4.18}
 \langle \bar x^*, z-\bar x\rangle\leq
d(z, A)+\varepsilon\|z-\bar x\|\,\,\,\,\forall z\in B(a, r)
\end{equation}
hold for any $u\in F^{-1}(b)\cap B(a, r)$, $(v^*, u^*)\in
N_c({\rm gph}(F^{-1}), (b, u))\cap (B_{Y^*}\times B_{X^*})$, $\bar x\in
A\cap B(a, r)$, $\bar x^*\in N_c(A, \bar x)\cap B_{X^*}$ and $(y,
x)\in {\rm gph}(F^{-1})$ with $\|y-b\| +\|x-a\|<r$.

We prove that there exist $\tau_1, \delta_1>0$ such that
\begin{equation}\label{4.19a}
  d(x, S)\leq\tau_1(d(b, F(x))+d(x, A))\,\,\,\,\forall x\in B(a,
\delta_1).
\end{equation}
Granting this, it follows that (GEC) is metrically subregular at $a$.

Let $\tau_1:=\frac{(\tau+1)\varepsilon+\tau}{1-(2\tau+1)\varepsilon}$ and choose $\delta_1\in (0, \frac{r}{2})$ such that $\delta_1<\tau_1r$. Let $x\in B(a, \delta_1)\backslash S$.
Then $d(x, S)\leq\|x-a\|<\delta_1$. Take any
$\gamma\in(\max\{(2\tau+1)\varepsilon, \frac{d(x,
S)}{\delta_1}\}, 1)$. By Lemma 2.1, there exist $u\in S$
and $u^*\in N_c(S, u)\cap B_{X^*}$ with $\|u^*\|=1$ such that
\begin{equation}\label{4.20}
\gamma\|x-u\|<\min\{d(x, S), \langle u^*, x-u\rangle\}.
\end{equation}
Noting that $\|u-a\|\leq \|u-x\|+\|x-a\|<\frac{d(x,
S)}{\gamma}+\delta_1<r$, it follows from the strong BCQ of \eqref{3.4} that there exist
$y^*\in B_{Y^*}$ and $(x_1^*, x_2^*)\in D_c^*F(u, b)(y^*)\times
(N_c(A, u)\cap B_{X^*})$ such that
\begin{equation}\label{4.21}
u^*=\tau(x_1^*+x_2^*).
\end{equation}
Noting that $(x_1^*, -y^*)\in N_c({\rm gph}(F), (u,b))$, it follows from \eqref{4.17} and \eqref{4.18} that
\begin{equation}\label{4.22}
\langle x_1^*, \widetilde x-u\rangle\ -\langle y^*, \widetilde
y-b\rangle\leq \frac{1+\tau}{\tau}\varepsilon(\|\widetilde
y-b\|+\|\widetilde x-u\|)
\end{equation}
and
\begin{equation}\label{4.23}
\langle x_2^*, x-u\rangle\leq d(x, A)+\varepsilon\|x-u\|
\end{equation}
hold for any
$(\widetilde y, \widetilde x)\in {\rm gph}(F^{-1})\cap B(b, r)\times
B(a, \delta_1)$.

If $F(x)\cap B(b, r)=\emptyset$, then $d(b, F(x))>
r$ and thus
\begin{equation}\label{4.24}
d(x, S)\leq\|x-a\|\leq \delta_1<\tau_1(d(b, F(x))+d(x, A)).
\end{equation}
 Next, we assume that $F(x)\cap B(b, r)\neq\emptyset$. Using \eqref{4.22}, one has
\begin{equation*}\label{4.25}
\langle x_1^*, x-u\rangle\ \leq\| y-b\|+
\frac{1+\tau}{\tau}\varepsilon(\|y-b\|+\|x-u\|)
\end{equation*}
holds for any $y\in F(x)\cap B(b, r)$. This and $d(b, F(x))=d(b, F(x)\cap B(b, r))$ imply that
\begin{equation}\label{4.26}
\langle x_1^*, x-u\rangle\ \leq
(\frac{1+\tau}{\tau}\varepsilon+1)d(b,
F(x))+\frac{1+\tau}{\tau}\varepsilon\|x-u\|.
\end{equation}
By \eqref{4.20}, \eqref{4.21}, \eqref{4.23} and \eqref{4.26}, one has
\begin{equation*}
\gamma\|x-u\|\leq ((\tau+1)\varepsilon+\tau)(d(b, F(x))+d(x,
A))+(2\tau+1)\varepsilon\|x-u\|.
\end{equation*}
This means that
\begin{equation*}
d(x, S)\leq
\frac{(\tau+1)\varepsilon+\tau}{\gamma-(2\tau+1)\varepsilon}(d(b,
F(x))+d(x, A)).
\end{equation*}
Taking limits as $\gamma\rightarrow 1^-$ and using
\eqref{4.24}, one has
\begin{equation*}
d(x, S)\leq \tau_1(d(b, F(x))+d(x,
A)).
\end{equation*}
Hence \eqref{4.19a} holds. The proof is complete.
\end{proof}

The following theorem provides one
characterization for metric subregularity of (GEC) defined by the L-subsmooth multifunction and the submsooth subset in the Asplund space.  The proof can be obtained by using Proposition 4.1, Theorem 4.3 and Lemma 2.1 of the Asplund space version.
\begin{them}
Let $X$ be an Asplund space and $a\in
S$. Suppose that $F$ is $\mathcal{L}$-subsmooth at $(a, b)$ and $A$
is subsmooth at $a$. Then (GEC) is metrically subregular at
$a$ if and only if there exist $\tau, \delta\in (0, +\infty)$ such that
(GEC) has the strong BCQ of \eqref{3.4a} at all points in $S\cap B(a, \delta)$ with the same constant $\tau>0$.
\end{them}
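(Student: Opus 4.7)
The necessity is essentially already done: by Proposition 4.1, metric subregularity of (GEC) at $a$ yields constants $\tau,\delta\in(0,+\infty)$ such that the strong BCQ of \eqref{3.4a} holds at every $z\in S\cap B(a,\delta)$ with the same constant $\tau>0$. So the only thing left is the sufficiency direction.

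For sufficiency my plan is to mimic the proof of Theorem 4.3 verbatim, replacing the Clarke-normal form of Lemma 2.1 by its Asplund-space variant (which supplies a Fr\'echet normal instead of a Clarke normal), and then apply the weaker hypothesis \eqref{3.4a} rather than \eqref{3.4}. Concretely, I fix $\varepsilon\in(0,\tfrac{1}{2\tau+1})$ and use the $\mathcal{L}$-subsmoothness of $F$ at $(a,b)$ together with the subsmoothness of $A$ at $a$ to produce some $r\in(0,\delta)$ for which the two inequalities \eqref{4.17} and \eqref{4.18} hold. Set $\tau_1:=\frac{(\tau+1)\varepsilon+\tau}{1-(2\tau+1)\varepsilon}$ and pick $\delta_1\in(0,\tfrac{r}{2})$ with $\delta_1<\tau_1 r$. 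Given $x\in B(a,\delta_1)\setminus S$ and $\gamma\in(\max\{(2\tau+1)\varepsilon,d(x,S)/\delta_1\},1)$, I invoke the Asplund-space version of Lemma 2.1 applied to $S$ to obtain $u\in S$ and $u^*\in \hat{N}(S,u)\cap B_{X^*}$ with $\|u^*\|=1$ and
\[
\gamma\|x-u\|<\min\{d(x,S),\langle u^*,x-u\rangle\}.
\]
Because $\|u-a\|<d(x,S)/\gamma+\delta_1<r\le\delta$, the assumed strong BCQ of \eqref{3.4a} at $u$ now applies to this Fr\'echet normal $u^*$ and yields $y^*\in B_{Y^*}$, $x_1^*\in D_c^*F(u,b)(y^*)$, $x_2^*\in N_c(A,u)\cap B_{X^*}$ with $u^*=\tau(x_1^*+x_2^*)$.

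From this point the argument is identical to the last part of Theorem 4.3: using \eqref{4.17} against $(x_1^*,-y^*)\in N_c({\rm gph}(F),(u,b))$ together with $d(b,F(x))=d(b,F(x)\cap B(b,r))$, one bounds $\langle x_1^*,x-u\rangle$ in terms of $d(b,F(x))$ and $\|x-u\|$; inequality \eqref{4.18} bounds $\langle x_2^*,x-u\rangle$ by $d(x,A)+\varepsilon\|x-u\|$; combining these with $\gamma\|x-u\|<\langle u^*,x-u\rangle=\tau(\langle x_1^*,x-u\rangle+\langle x_2^*,x-u\rangle)$ gives, after absorbing the $(2\tau+1)\varepsilon\|x-u\|$ term, the estimate
\[
d(x,S)\leq\frac{(\tau+1)\varepsilon+\tau}{\gamma-(2\tau+1)\varepsilon}\bigl(d(b,F(x))+d(x,A)\bigr),
\]
and letting $\gamma\to 1^-$ produces \eqref{4.19a} with constant $\tau_1$; the case $F(x)\cap B(b,r)=\emptyset$ is handled by $d(b,F(x))>r$ exactly as before. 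This is the desired metric subregularity at $a$.

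The only delicate point, and the step I would check most carefully, is the transition from the Clarke-normal Lemma 2.1 to its Fr\'echet-normal counterpart; once this is made, the weaker assumption \eqref{3.4a} still provides the decomposition of $u^*$ needed to drive the rest of the proof, and no other substantive change is required. I do not expect genuine obstacles beyond this substitution, since the quantitative estimates in \eqref{4.17}--\eqref{4.18} only use $N_c$-normals, which dominate the Fr\'echet normals we actually produce.
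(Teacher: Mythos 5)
Your proposal is correct and matches the paper's intended argument exactly: the paper itself states that the proof follows from Proposition 4.1 (necessity) together with the proof of Theorem 4.3 run with the Asplund-space version of Lemma 2.1, which is precisely the substitution of a Fr\'echet normal $u^*\in\hat N(S,u)$ for the Clarke normal so that the weaker hypothesis \eqref{3.4a} suffices. No further changes are needed, since the remaining estimates only use the decomposition of $u^*$ and the subsmoothness inequalities \eqref{4.17}--\eqref{4.18}.
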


\noindent{\bf Remark 4.2} Let $a\in S$. We define $\tau(F, a, b; A):=\inf\{\tau>0: \eqref{4.1a}\,\,
\mathrm{holds}\}$. For any $z\in S$, we define
\begin{eqnarray*}
\gamma(F, z, b; A):=\inf\{\tau>0: {\rm (GEC) \ has\ the\ strong\ BCQ \ of \ \eqref{3.4a} \ at\ }z\ {\rm with\ }\tau\},  \\
\gamma_c(F,z,b;A):=\inf\{\tau>0: {\rm (GEC) \ has\ the\ strong\ BCQ \ of \ \eqref{3.4} \ at\ }z\ {\rm with\ }\tau\}.\,
\end{eqnarray*}
By the proof of Proposition 4,1, one can verify that
$$
\tau(F, a, b; A)\geq\limsup_{z\stackrel{S}\longrightarrow
a}\gamma(F, z, b; A)\geq\gamma(F, a, b; A).
$$
If $F$ is $\mathcal{L}$-subsmooth at $(a, b)$ and $A$ is
subsmooth at $a$, using the proof of Theorem 4.3, one has
$$
\limsup_{z\stackrel{S}\longrightarrow
a}\gamma_c(F, z, b; A)\geq\tau(F, a, b; A).
$$
In addition, if $X$ is an Asplund space, by Theorem 4.4, one has
$$
\tau(F, a, b; A)=\limsup_{z\stackrel{S}\longrightarrow
a}\gamma(F, z, b; A).
$$

\noindent{\bf Acknowledgment.} This research was supported by the National Natural Science Foundations of P. R. China (Grant No. 11261067 and No. 11371312) and by the IRTSTYN.

%{\bf Acknowledgment.} The authors wish to thank the referees for many valuable comments and for
%reference [1,4,9] as well as drawing authors' attention to amenable functions due to Poliquin and
%Rockafellar and penalty functions.


\begin{thebibliography}{99}

\bibitem{BB} H. Bauschke, J. Borwein, and W. Li, Strong conical hull intersection property, bounded linear regularity, Jameson's property (G), and error bounds in convex optimization, Math. Program. (Series A), 86(1)(1999), pp. 135-160.

\bibitem{Li}  W. Li, Abadie's constraint qualification, metric regularity, and error bounds for differentiable
convex inequalities, SIAM J. Optim., 7(1997), pp. 966-978.


\bibitem{HL} J.-B. Hiriart-Urruty and C. Lemarechal, Convex analysis and minimization algorithms I, Springer-Verlag, New York, 1993.



\bibitem{LN} C. Li and K. F. Ng, Constraint qualification, the strong CHIP and best approximation with convex constraint in Banach spaces, SIAM J. Optim., 14(2002), pp. 584-607.



\bibitem{LNS} W. Li, C. Nahak and I. Singer, Constraint qualifications for semi-infinite systems of convex inequalities, SIAM. J. Optim., 11(2000), pp. 31-52

\bibitem{W} Z. Wei, Linear regularity for an infinite system formed by {\it p}-uniformly subsmooth  sets in Banach spaces, Taiwan. J. Math., 16(2012), pp. 335-352.


\bibitem{WZ} Z. Wei, J.-C Yao and X. Y. Zheng, Strong Abadie CQ, ACQ, calmness and linear regularity, Math. Program., 145 (2014), pp. 97-131.

 \bibitem{11} X. Y. Zheng and K. F. Ng, Metric regularity and constraint
qualifications for convex inequalities on Banach spaces, SIAM J.
Optim., 14 (2004), pp. 757-772.

\bibitem{9} X. Y. Zheng and K. F. Ng , Calmness for L-subsmooth multifunctions in Banach spaces, SIAM. J. Optim.,
19(2009), pp. 1648-1673.


\bibitem{ZW} X. Y. Zheng, Z. Wei and J.-C. Yao, Uniform subsmoothness and linear regularity for a collection of
infinitely many closed sets, Nonlinear Anal., 73(2010), pp. 413-430.

\bibitem{10} H. Hu, Characterizations of the strong basic constraint qualifications, Math. Oper.
Res., 30 (2005), pp.956-965.


\bibitem{7} X. Y. Zheng and K. F. Ng, Metric Subregularity and Constraint qualifications for
Convex Generalized equations in Banach spaces, SIAM J. Optim.,
18(2007), pp. 437-460.

\bibitem{4} A. L. Dontchev and R. T. Rockafellar, Regularity and
conditioning of solution mappings in variational analysis,
Set-Valued Anal., 12(2004), pp. 79-109.




\bibitem{BS} J. F. Bonnans and A. Shapiro, Perturbation Analysis of Optimization Problems, Springer, New York, 2000.

\bibitem{5} R. Henrion and J. Outrata, Calmness of constraint
systems with applications, Math. Program., 104(2005), pp. 437-464.


\bibitem{13} D. Klatte and B. Kummer, Nonsmooth Equations in
Optimization. Regularity, Calculus, Methods and Applications,
Nonconvex Optimization and its Application 60, Kluwer Academic
Publishers, Dordrecht, 2002.

\bibitem{29} A. S. Lewis and J. S. Pang, Error bounds for convex inequality systems, In: Crouzeix, J. P., ed., Generalized Convexity, Proceedings of the Fifth Sysposium on Generalized Convexity, Luminy Marseille, 1997, pp.75-10.

\bibitem{22} B. S. Mordukhovich, Complete characterization of
openness, metric regularity, and Lipschitzian properties of
multifunctions, Trans. Amer. Math. Soc., 340(1993), pp. 1-35.

\bibitem{23} R. T. Rockafellar and R. J. B. Wets, Variational Analysis, Springer, Heidelberg (1998).


\bibitem{28} C. Zalinescu, Weak sharp minima, well-behaving functions and global error bounds for convex inequalities
in Banach spaces, Proc. 12th Baikal Internat. Conf. on Optimization Methods and Their Appl. Irkutsk,
Russia, 2001, pp.272-284.


\bibitem{HYZ} Q. He, J. Yang and B. Zhang, Metric subregularity for subsmooth generalized constraint equation in Banach spaces, Journal of Applied Mathematics, Volume 2 012, Article ID 185249, 16 pages.

\bibitem{17} B. S. Mordukhovich, Variational Analysis and Generalized Differentiation I/II,
Springer-verlag, Berlin Heidelberg, 2006.


\bibitem{18} B. S. Mordukhovich and Y. Shao, Nonsmooth sequential
analysis in Asplund spaces, Trans. Amer. Math. Soc., 348(1996), pp.
1235-1280.

\bibitem{16} R. R. Phelps, Convex functions, Monotone
operators and Differentiability, Lecture Notes in Math. 1364,
Springer, New York, 1989.

\bibitem{15} X. Y. Zheng and K. F. Ng, Linear regularity
for a collection of subsmooth sets in Banach spaces, SIAM J.Optim.,
19(2008), pp. 62-76.


\bibitem{14} J. M. Borwein and S. Fitzpatrick, Existence of nearest
points in Banach spaces, Can. J. Math.(XLI)4 (1989), pp. 702-720.




















\bibitem{2} D. Aussel, A. Daniilidis and L. Thibault,
Subsmooth sets: Functional characterizations and related
 concepts, Trans. Amer. Math. Soc., 357(2005), pp. 1275-1301.











\bibitem{24} H. H. Schaefer, Topological Vector Spaces, The Macmillan Company, New York, 1967.


\bibitem{BFL} A. Bakan, F. Deutsch and W. Li, Strong CHIP, normality
and linear regularity of convex sets,  Trans. Amer. Math. Soc., 357,
no.10(2005), pp. 3831-3863.




\bibitem{12} F. H. Clarke, Optimization and Nonsmooth Analysis, Wiley, New York, 1983.















































































\end{thebibliography}
\end{document}